\newcommand{\ben}{\begin{enumerate}}
\newcommand{\een}{\end{enumerate}}
\newcommand{\ble}{\begin{lem}}
\newcommand{\ele}{\end{lem}}
\newcommand{\bth}{\begin{thm}}
\renewcommand{\eth}{\end{thm}}
\newcommand{\bpr}{\begin{prop}}
\newcommand{\epr}{\end{prop}}
\newcommand{\bco}{\begin{cor}}
\newcommand{\eco}{\end{cor}}
\newcommand{\bcon}{\begin{conj}}
\newcommand{\econ}{\end{conj}}
\newcommand{\bde}{\begin{defn}}
\newcommand{\ede}{\end{defn}}
\newcommand{\bex}{\begin{exa}}
\newcommand{\eex}{\end{exa}}
\newcommand{\barr}{\begin{array}}
\newcommand{\earr}{\end{array}}
\newcommand{\btab}{\begin{tabular}}
\newcommand{\etab}{\end{tabular}}
\newcommand{\beq}{\begin{equation}}
\newcommand{\eeq}{\end{equation}}
\newcommand{\bea}{\begin{eqnarray*}}
\newcommand{\eea}{\end{eqnarray*}}
\newcommand{\bal}{\begin{align*}}
\newcommand{\bce}{\begin{center}}
\newcommand{\ece}{\end{center}}
\newcommand{\bpi}{\begin{picture}}
\newcommand{\epi}{\end{picture}}
\newcommand{\bpp}{\begin{picture}}
\newcommand{\epp}{\end{picture}}
\newcommand{\bfi}{\begin{figure} \begin{center}}
\newcommand{\efi}{\end{center} \end{figure}}
\newcommand{\bprf}{\begin{proof}}
\newcommand{\eprf}{\end{proof}\medskip}
\newcommand{\capt}{\caption}
\newcommand{\bsl}{\begin{slide}{}}
\newcommand{\esl}{\end{slide}}
\newcommand{\bfr}{\begin{frame}}
\newcommand{\efr}{\end{frame}}
\newcommand{\hqed}{\hfill \qed}
\newcommand{\eqed}[1]{$\textcolor{white}{\qed}\hfill{\dil#1}\hfill\qed$}
\newcommand{\ol}{\overline}
\newcommand{\hso}[1]{\hspace{-1pt}}
\newcommand{\vs}[1]{\vspace{#1}}
\newcommand{\emp}{\emptyset}
\newcommand{\sbe}{\subseteq}
\newcommand{\iso}{\cong}
\newcommand{\lt}{\lhd}
\newcommand{\gt}{\rhd}
\newcommand{\lte}{\unlhd}
\newcommand{\gte}{\unrhd}
\newcommand{\case}[4]{\left\{\barr{ll}#1&\mbox{#2}\\#3&\mbox{#4}\earr\right.}
\def\<{\langle}
\def\>{\rangle}
\newcommand{\ra}{\rightarrow}
\newcommand{\al}{\alpha}
\newcommand{\de}{\delta}
\newcommand{\ka}{\kappa}
\newcommand{\De}{\Delta}
\newcommand{\bbN}{{\mathbb N}}
\newcommand{\cA}{{\cal A}}
\newcommand{\cG}{{\cal G}}
\newcommand{\cI}{{\cal I}}
\newcommand{\cL}{{\cal L}}
\newcommand{\cM}{{\cal M}}
\newcommand{\cO}{{\cal O}}
\newcommand{\cP}{{\cal P}}
\newcommand{\cS}{{\cal S}}
\newcommand{\ab}{\ol{a}}
\newcommand{\Ib}{\ol{I}}
\newcommand{\Jb}{\ol{J}}
\DeclareMathOperator{\lcm}{lcm}
\DeclareMathOperator{\st}{st}
\newcommand{\dil}{\displaystyle}
\newtheorem{thm}{Theorem}[section]
\newtheorem{prop}[thm]{Proposition}
\newtheorem{cor}[thm]{Corollary}
\newtheorem{lem}[thm]{Lemma}
\newtheorem{conj}[thm]{Conjecture}
\newtheorem{exa}[thm]{Example}
\newtheorem{question}[thm]{Question}
\theoremstyle{definition}
\newtheorem{definition}[thm]{Definition}
\newcommand{\uF}{\breve{F}}
\newcommand{\uS}{\breve{S}}
\newcommand{\cU}{{\mathcal U}}
\newcommand{\na}{\nabla}
\newcommand{\rhoh}{\hat{\rho}}
\newcommand{\chih}{\hat{\chi}}
\newcommand{\cOb}{\ol{\cO}}
\newcommand{\bba}{\ol{b}}
\newcommand{\re}[1]{\filldraw (current) rectangle ++ (#1-.3,.7) ++(.3,-.7) coordinate (current);}
\newcommand{\ro}{\filldraw[fill=yellow] (current) rectangle ++(.7,.7)  ++(.3,-.7)
coordinate (current);}
\newcommand{\twobyone}{\filldraw[fill=red] (current) ++(0,.7) rectangle ++(.7,-1.7) ++(.3,1) coordinate (current);}
\newcommand{\blank}{\draw (current) ++(1,0) coordinate (current);}
\newcommand{\releft}[1]{ \filldraw (current) -- ++(-.3,0.23) -- ++(.3,.24) -- ++(-.3,.23) -- ++(#1,0) -- ++(0,-.7) ++(.3,0) coordinate (current);}
\newcommand{\reright}[1]{ \filldraw (current) -- ++(0,.7)-- ++(#1-.3,0) -- ++(.3,-.23) -- ++(-.3,-.24) -- ++(.3,-.23);}
\newcommand{\firstrow}{\coordinate (current) at (.15,-.85);}
\newcommand{\secondrow}{\coordinate (current) at (.15,-1.85);}
\newcommand{\thirdrow}{\coordinate (current) at (.15,-2.85);}
\newcommand{\fourthrow}{\coordinate (current) at (.15,-3.85);}
\newcommand{\fifthrow}{\coordinate (current) at (.15,-4.85);}
\begin{document}
\pagestyle{plain}

\title{Rowmotion on fences
}
\author{Sergi Elizalde\\[-5pt]
\small{Department of Mathematics, Dartmouth College,}\\[-5pt]
\small{Hanover, NH 03755, 
\texttt{sergi.elizalde@dartmouth.edu}}\\
Matthew Plante\\[-5pt]
\small{Department of Mathematics, University of Connecticut,}\\[-5pt]
\small{Storrs, CT 06269, \texttt{matthew.plante@uconn.edu}}\\
Tom Roby\\[-5pt]
\small{Department of Mathematics, University of Connecticut,}\\[-5pt]
\small{Storrs, CT 06269, \texttt{tom.roby@uconn.edu}}\\
Bruce E. Sagan\\[-5pt]
\small Department of Mathematics, Michigan State University,\\[-5pt]
\small East Lansing, MI 48824, USA, {\tt sagan@math.msu.edu}
}

\date{
	\begin{flushleft}
	\small Key Words: fence poset, homomesy, homometry, rowmotion, tiling
	                                       \\[5pt]
	\small AMS subject classification (2020):  05E18 (Primary) 06A07  (Secondary)
	\end{flushleft}}

\maketitle

\begin{abstract}
A fence is a poset with elements $F=\{x_1,x_2,\ldots,x_n\}$ and covers
$$
  x_1\lt x_2 \lt \ldots \lt x_a \gt x_{a+1} \gt \ldots \gt x_b \lt x_{b+1} \lt \cdots  
$$
where $a,b,\ldots$ are positive integers.  We investigate rowmotion on antichains and ideals of $F$.  In particular, we show that orbits of antichains can be visualized using tilings.  This permits us to prove various homomesy results for the number of elements of an antichain or ideal in an orbit.  Rowmotion on fences also exhibits a new phenomenon, which we call homometry, where the value of a statistic is constant on orbits of the same size.
Along the way, we prove a general homomesy result for all self-dual posets. We end with some conjectures and avenues for future research.
\end{abstract}


\section{Introduction}

The purpose of this work is to initiate the study of the dynamical algebraic combinatorics of fence posets.  A {\em fence} is a poset with elements $F=\{x_1,x_2,\ldots,x_n\}$, partial order $\lte$, and covers
\begin{equation}
\label{covers}
  x_1\lt x_2 \lt \ldots \lt x_a \gt x_{a+1} \gt \ldots \gt x_b \lt x_{b+1} \lt \cdots  
\end{equation}
where $a,b,\ldots$ are positive integers.  The maximal chains of $F$ are called {\em segments}.   A fence with seven elements and three segments is shown in Figure~\ref{F(3,3,2)}.  
Throughout we will often use $n$ for the cardinality of $F$, which we denote by $\#F$. 
We also let $[n]=\{1,2,\ldots,n\}$.

There are a number of different conventions for indicating the size of the segments of a fence $F$ in the literature depending on the application being considered.  Our results will be simplest if described in terms of unshared elements. 
Call $x\in F$ {\em shared} if it is the intersection of two segments; otherwise $x$ is {\em unshared}.  In Figure~\ref{F(3,3,2)}, the elements $x_3$ and $x_6$ are shared and all other elements unshared.  Let $\al=(\al_1,\al_2,\ldots,\al_s)$ be a composition (a sequence of positive integers called {\em parts}) with $\al_1,\al_s\ge2$.  The corresponding fence is 
$$
F=\uF(\al)=\uF(\al_1,\al_2,\ldots,\al_s)
$$
where
$$
\al_i = 1+(\text{number of unshared elements on the $i$th segment})
$$
for $i\in[s]$.  The fence in Figure~\ref{F(3,3,2)} is $F=\uF(3,3,2)$.
Note that for any fence, 
$$
\#F=\al_1+\al_2+\dots+\al_s-1.
$$

\bfi
\begin{tikzpicture}
\fill(0,1) circle(.1);
\fill(1,2) circle(.1);
\fill(2,3) circle(.1);
\fill(3,2) circle(.1);
\fill(4,1) circle(.1);
\fill(5,0) circle(.1);
\fill(6,1) circle(.1);
\draw (0,1)--(2,3)--(5,0)--(6,1);
\draw (0,.5) node{$x_1$};
\draw (1,1.5) node{$x_2$};
\draw (2,2.5) node{$x_3$};
\draw (3,1.5) node{$x_4$};
\draw (4,.5) node{$x_5$};
\draw (5,-.5) node{$x_6$};
\draw (6,.5) node{$x_7$};
\end{tikzpicture}
\capt{The fence $F=\uF(3,3,2)$ \label{F(3,3,2)}}
\efi

Fences have been of recent interest because of their connections with cluster algebras, $q$-analogues, and unimodality.  
Let $P$ be a poset with partial order $\lte$. Recall that $I\sbe P$ is a {\em (lower order) ideal} of $P$ if $x\in I$ and $y\lte x$ implies $y\in I$. {\em Upper order ideals}, $U$, are defined by reversing the inequality. 
When merely writing ``ideal," we always mean a lower order ideal.
Let $\cI(P)$ and $\cU(P)$ be the set of (lower order) ideals and upper order ideals of $P$, respectively. 
The set of ideals of a finite poset forms a distributive lattice under inclusion. 
The lattice $\cI(F)$ of ideals of a fence $F$ can be used to compute mutations in a cluster algebra on a surface with marked points~\cite{cla:epp,MSW:pca,pro:cfp,sch:cef,sch:caa,ST:caa,yur:cef,yur:cce}. 

Let $q$ be a variable and $r(F;q)$ be the rank generating function for $\cI(F)$.  Mourier-Genoud and Ovsienko~\cite{MGO:qdr} were able to define $q$-analogues of rational (and even real) numbers which are certain rational functions of $q$.  The numerators and denominators of these fractions are exactly the polynomials $r(F;q)$.  In addition, they conjectured the following result.  Progress on this question was made in~\cite{cla:epp,ES:prs,gan:loi,MSS:ruc,MZ:rpl} until a proof was provided by Oğuz and Ravichandran.
\bth[\cite{OR:rpf}]
The polynomial $r(F;q)$ is unimodal.
\eth

Our  focus is going to be on fences from the viewpoint of algebraic dynamical combinatorics and, in particular, on rowmotion.
A subset $A$ of a finite poset $(P,\lte)$ is an {\em antichain} if no two elements of $A$ are comparable.
Let $\cA(P)$ be the set of antichains of $P$.  
Ideals of both types and antichains are related by the maps
$\De:\cI(P)\ra\cA(P)$ where
$$
\De(I) = \{x\in I \mid \text{$x$ is a maximal element of $I$}\},
$$
and $\na:\cU(P)\ra\cA(P)$ where
$$
\na(U) = \{x\in U \mid \text{$x$ is a minimal element of $U$}\}.
$$
We also let $c:\cP\ra\cP$ be the complement operator $c(S)=P-S$.
{\em Rowmotion on antichains} is the group action on $\cA(P)$ generated by the map
$$
\rho = \na \circ c \circ \De^{-1}
$$
where we always compose functions from right to left.  We will also consider {\em rowmotion on ideals}, which is generated by
$$
\rhoh =  \De^{-1} \circ \na \circ c.
$$
There is clearly a bijection between the orbits of $\rho$ and those of $\rhoh$.
We will call the number of elements (that is, the number of antichains or, equivalently, the number of ideals) in an orbit either its {\em size} or its {\em length}.
Rowmotion and its generalizations have been investigated by many authors~\cite{DPS:rop,EP:pbt,EP:cpbh,GP:rs,GR:ipb2,GR:ipb1,jos:atr,MR:pub,str:rgt,SW:pr,TW:rsm,vor:hpt}.  See, in particular, the survey articles of Roby~\cite{rob:dac} and Striker~\cite{str:dac}.
We will let 
$$
\cI(\al)=\cI(\uF(\al))
$$ 
and similarly for $\cU$ and $\cA$.

In addition to describing the orbits of rowmotion on fences, we will also consider properties of various statistics.
If $S$ is a finite set, then a 
{\em statistic} on $S$ is a map
$\st:S\ra\bbN$ where $\bbN$ is the set of  nonnegative integers.
If $G$ is a group acting on $S$, then statistic $\st$ is 
{\em $d$-mesic}  if there is a constant $d$ such that every orbit $\cO$ of the action has average
$$
\frac{\st\cO}{\#\cO}=d,
$$
where $\st\cO=\sum_{x\in\cO} \st x$ and $\#\cO$ is the size of the orbit.
We say that $\st$ is {\em homomesic} if it is $c$-mesic for some $c$.  Homomesy is a well-studied property of rowmotion. 
The reader will find more information about this notion in the survey articles just cited.
Rowmotion on fences displays a new and interesting phenomenon.  We say that $\st$ is {\em homometric} if 
$\st\cO$ is constant over all orbits of the same cardinality.  Equivalently, for any two orbits $\cO_1$ and $\cO_2$ we have
$$
\#\cO_1 = \#\cO_2 \implies 
\st\cO_1 =\st\cO_2.
$$
Note that homomesy implies homometry, but not conversely.
In the sequel we will see many homometries which are not homomesies.

The rest of this paper is structured as follows.
In the next section we introduce our principal tool in this work, which is a representation of rowmotion orbits of antichains of fences in terms of certain tilings.
Section~\ref{has} is devoted to applying this model to prove various homomesy results for fences with any number of segments.  We also define, for any self-dual poset, a new orbit structure which is coarser than that of rowmotion, and prove a homomesy on ideals in this setting.
Section~\ref{ffs} is devoted to examining orbits and homometries for certain fences having at most five segments.  We end with a section containing conjectures and future directions.


\section{Tilings}
\label{til}

It turns out that the orbits of $\cA(\al)$ can be nicely visualized in terms of tilings.  This will be our principal tool in proving homo- and homometry results.

\bfi
\begin{tikzpicture}[scale=.5]
\draw[very thin] (0,0) grid (10,4);
\draw (-1,2) node{$\cdots$};
\draw (11,2) node{$\cdots$};
\draw(-3,3.5) node{$1$};
\draw(-3,2.5) node{$2$};
\draw(-3,1.5) node{$3$};
\draw(-3,.5) node{$4$};
\end{tikzpicture}
\capt{Part of the horizontal strip $H_4$}
\label{H_4}
\efi

Consider an infinite horizontal strip $H_s$ subdivided into unit squares with rows  numbered $1,2,\ldots,s$ from top to bottom, and infinitely many columns.  We display $H_4$ together with its row numbering in Figure~\ref{H_4}.  In the following definition, an {\em $a\times b$ tile} is a tile which covers $a$ rows and $b$ columns in $H_s$.

\begin{definition}
\label{TilDef}
Let $\al=(\al_1,\al_2,\ldots,\al_s)$ be a composition with $s$ parts.  An {\em $\al$-tiling} is a tiling of $H_s$  using yellow $1\times 1$ tiles, red $2\times 1$ tiles, and black $1\times(\al_i-1)$ tiles in row $i$, for $1\le i\le s$, such that the following hold for all rows.
\begin{enumerate}
    \item[(a)] If $\al_i>1$ and the red tiles are ignored, then the black and yellow  tiles alternate in row $i$.
    \item[(b)] There is a red tile in a column covering rows $i$ and $i+1$ if and only if either the previous column contains two yellow tiles in those two  rows when $i$ is even, or the next column contains two yellow tiles in those two  rows when $i$ is odd.
\end{enumerate}
\end{definition}

When $\al$ is clear from the context, we will use the term {\em tiling} to refer to an $\al$-tiling. 
We consider two tilings to be the same if one is a horizontal translate of the other.  It will follow from the proof of Lemma~\ref{tiling} that all $\al$-tilings are periodic and so can be viewed as lying on a cylinder.  When a tiling is displayed in a figure, we will draw a bounded rectangle and  assume that the two vertical edges  are identified, and indicate with a jagged edge where any black tile crosses this boundary.
See Figure~\ref{uF(4,3,4)} for the four possible $(4,3,4)$-tilings.  
Note that by considering tilings to be on a cylinder, notions such as the number of tiles of each color make sense.  For example, row $1$ of the top tiling in Figure~\ref{uF(4,3,4)} has four yellow tiles, four black tiles, and a red tile which also intersects row $2$.

In a tiling we will call a square of $H_s$ yellow, red, or black depending on whether the tile covering the square has the corresponding color.  The {\em head} of a red tile is the square it covers in the higher of the two rows.

\begin{figure}
\centering
\begin{tikzpicture}
\draw(-2,1.5) node{$\uF(4,3,4)=$};
\fill(-1,0) circle(.1);
\fill(0,1) circle(.1);
\fill(1,2) circle(.1);
\fill(2,3) circle(.1);
\fill(3,2) circle(.1);
\fill(4,1) circle(.1);
\fill(5,0) circle(.1);
\fill(6,1) circle(.1);
\fill(7,2) circle(.1);
\fill(8,3) circle(.1);
\draw (-1,0)--(2,3)--(5,0)--(8,3);
\draw (-1,-.5) node{$x_1$};
\draw (0,.5) node{$x_2$};
\draw (1,1.5) node{$x_3$};
\draw (2,2.5) node{$x_4$};
\draw (3,1.5) node{$x_5$};
\draw (4,.5) node{$x_6$};
\draw (5,-.5) node{$x_7$};
\draw (6,.5) node{$x_8$};
\draw (7,1.5) node{$x_9$};
\draw (8,2.5) node{$x_{10}$};
\end{tikzpicture}

\vs{20pt}

\begin{tikzpicture}[scale=0.5]
\draw[very thin] (0,0) grid (17,-3);
\thirdrow \releft{2} \ro \re{3} \ro \re{3} \ro \blank \re{3} \ro \reright{1}
\secondrow \blank \ro \re{2} \ro \re{2} \ro \re{2} \ro \twobyone \re{2} \ro \re{2} 
\firstrow \twobyone \ro \re{3} \ro \re{3} \ro \re{3} \ro \re{3};
\begin{scope}[shift={(0,-4)}]
\draw[very thin] (0,0) grid (17,-3);
\thirdrow \re{3} \ro \re{3} \ro \blank \re{3} \ro  \re{3} \ro
\secondrow \blank \ro \re{2} \ro \re{2} \ro \twobyone \re{2} \ro  \re{2} \ro \re{2}
\firstrow \twobyone \ro \re{3} \ro \re{3} \ro \re{3} \ro \re{3}
\end{scope}
\begin{scope}[shift={(0,-8)}]
\draw[very thin] (0,0) grid (17,-3);
\thirdrow \ro \re{3} \ro \blank \re{3} \ro \re{3} \ro  \re{3} 
\secondrow \blank \ro \re{2} \ro \twobyone \re{2} \ro \re{2} \ro  \re{2} \ro \re{2} 
\firstrow \twobyone \ro \re{3} \ro \re{3} \ro \re{3} \ro \re{3}
\end{scope}
\begin{scope}[shift={(19,-4)}]
\draw[very thin] (0,0) grid (5,-3);
\thirdrow \ro \blank \re{3}
\secondrow \ro \twobyone \re{2}
\firstrow \ro \re{3} \twobyone
\end{scope}
\end{tikzpicture}
\caption{The three antichain orbits of length $17$  and the orbit of length $5$  in $\uF(4,3,4)$.}
\label{uF(4,3,4)}
\end{figure}

Given $\al=(\al_1,\ldots,\al_s)$ we will now construct a bijection
$$
\phi:\{\cO \mid \text{$\cO$ an orbit of $\cA(\al)$}\}
\ra\{T \mid \text{$T$ an $\al$-tiling}\}
$$
as follows.  
Let the $i$th segment of $F(\al)$ be $S_i$.
Given $\cO$, we build $T=\phi(\cO)$ column-by-column.  Pick any $A\in\cO$ and any column $C$  of $H_s$ to correspond to $A$.  Color the square in row $i$ of $C$ yellow, red or black depending upon whether $S_i\cap A$ is empty, a shared element, or an unshared element, respectively.  
For example, the antichain $A=\{x_4,x_9\}$ in $\uF(4,3,4)$ corresponds to the first column of the uppermost tiling in Figure~\ref{uF(4,3,4)}.
Now color the column to the right of $C$ in $H_s$ in the same way using the antichain following $A$ in $\cO$, and similarly for the column to the left and the antichain preceding $A$.  Continue this process until all of $H_s$ is colored.  Clearly this is a periodic tiling and so can be wrapped onto a cylinder.  The tilings for the four antichain orbits in $\uF(4,3,4)$ are displayed in Figure~\ref{uF(4,3,4)}.  For example, the tiling with five columns corresponds to the orbit
$$
\{
\emp,\ 
\{x_1, x_7\},\ 
\{x_2, x_6, x_8\},\
\{x_3, x_5, x_9\},\
\{x_4, x_{10}\}
\}.
$$
\ble
\label{tiling}
For every $\al$, the map $\phi$ is a bijection.
\ele
\begin{proof}
We must first prove that $\phi$ is well defined in the sense that if $\cO$ is an orbit of antichains, then $T=\phi(\cO)$ is an $\al$-tiling. To start, we need to show that the colored squares can be partitioned into tiles of the appropriate sizes.  Every individual yellow square is clearly a $1\times 1$ tile.  And the red squares form $2\times 1$ tiles since any shared element of $F=F(\al)$ is in two adjacent segments.  The black tiles will take more work.

Suppose $x,y$ are unshared elements with $x$ covered by $y$.  We claim that $x\in A$ for some antichain $A$ if and only if $y\in\rho(A)$.  This will ensure  that in row $i$, any set of consecutive black squares will be of length at least $\al_i-1$, since that is the number of unshared elements in the $i$th segment $S_i$.  If $x\in A$, then $y$ is in the complement $U$ of the lower order ideal generated by $A$ since $y\gt x$.  And since $y$ is an unshared cover, $y$ is a minimal element of $U$.  This implies $y\in\rho(A)$.  The reverse implication is similar and left to the reader.

For the rest of the proof there are two cases depending on whether $i$ is even or odd.  We will just give details for the former, as the other case is very similar. From the previous paragraph, to show that sets of consecutive black squares have length exactly $\al_i-1$, it suffices to prove that, if $x\in A$ is the smallest  (respectively largest) unshared element on $S_i$, then the square before (respectively after) the one for $x$ is yellow.   Suppose that $x$ is the largest unshared element of $S_i$.  There are two subcases depending on whether $z\in A$ or $z\not\in A$, where $z$ is the largest unshared element of segment $S_{i-1}$.  In the first subcase, the maximal element $S_i\cap S_{i-1}$ is in $\rho(A)$, and so the square after the one for $x$ is covered by a red tile with its head in row $i-1$.  If $z\not\in A$, then $U$ contains an element of $S_{i-1}$ below its maximal element.  It follows that $\rho(A)\cap S_i=\emp$ and so the square after $x$ is a yellow tile.  In much the same way, one sees that if $x$ is the smallest unshared element of $S_i$ then the square before it is covered by a red tile with its head in row $i$ or a yellow tile depending on whether $w\in A$ or $w\not\in A$, where $w$ is the smallest unshared element of $S_{i+1}$.

We now verify condition (b) in Definition~\ref{TilDef}, postponing condition (a) until the next paragraph.  First suppose the column for $A$ contains a red tile covering rows $i$ and $i+1$.  Since $i$ is even, this corresponds to the minimal element $S_i\cap S_{i+1}$ being in $A$.  But this forces the intersections of $S_i$ and $S_{i+1}$ with $\rho^{-1}(A)$ to be empty.  It follows that the previous column contains yellow squares in those rows as desired.  All these implications are reversible, proving the converse statement.

Next we verify condition (a). It follows from the two previous paragraphs that any black tile is bounded on the left and on the right  by yellow tiles if one ignores the red ones.  Using similar arguments, one sees that between any two yellow tiles there must be a black tile as long as $\al_i\ge2$, so that black tiles can exist in row $i$.  So (a) follows and we have shown that the map $\phi$ is well defined. 

To show that $\phi$ is a bijection, we describe  its inverse.  Given an $\al$-tiling, $T$, we construct a family $\cO$ of subsets of $F$ as follows.
Each column $C$ of $T$ corresponds to an element $A\in\cO$.  
If the square in row $i$ of $C$ is yellow, then we 
let $A$ contain none of the elements of $S_i$.  If this square is a part of a red tile which covers row $i-1$ or $i+1$, then we let
$S_{i-1}\cap S_i$ or $S_i\cap S_{i+1}$ be in $A$, respectively.  Finally, if the square is the $j$th element of a black tile, then we put the $j$th smallest unshared element of $S_i$ in $A$.  Verifying that $A$ is an antichain and that the sequence of antichains forms an orbit does not involve any new ideas and so is left to the reader.  The fact that this is the inverse of $\phi$ is clear since the first part of the proof shows that this is true on the level of the individual columns of the tiling and antichains of the orbit.
\end{proof}

Using the tiling model it is easy to read off various statistics about orbits which will be useful in proving homomesy and homometry results.  For $x\in F$, consider the {\em indicator function on antichains} $\chi_x:\cA(F)\ra\{0,1\}$ defined by
$$
\chi_x(A)=\case{1}{if $x\in A$,}{0}{else.}
$$
For an orbit $\cO$ of antichains, $\chi_x(\cO)=\sum_{A\in\cO} \chi_x(A)$  is the number of times $x$ occurs in an antichain of the orbit.
We also define
$$
\chi(A) = \#A =\sum_{x\in F} \chi_x(A),
$$
so that $\chi(\cO)$ is the total number of antichain elements in an orbit.
As usual, we add a hat for the corresponding functions on ideals $I$.  For example, $\chih_x:\cI(F)\ra\{0,1\}$ is defined by
$$
\chih_x(I)=\case{1}{if $x\in I$,}{0}{else.}
$$

To state our first result in this regard we will need some additional notation for fences and tilings.  In $F$, we let
\begin{align*}
\uS_i &= \text{the set of unshared elements of segment $S_i$,}\\
s_{i,j}&= \text{the $j$th smallest element of $\uS_i$,}\\
s_i   &= \text{the unique element of $S_i\cap S_{i+1}$.}
\end{align*}
Note that we are also using $s$ for the number of segments.  But the presence or absence of a subscript will distinguish between the two uses of this notation.
Note that $s_i$ is a minimal or maximal element of $F$ depending on whether $i$ is even or odd, respectively.   For the fence $\uF(4,3,4)$ in Figure~\ref{uF(4,3,4)} we have
\begin{align*}
x_1=s_{1,1},\ x_2=s_{1,2},\ x_3=s_{1,3},\ x_4=s_1,\
x_5=s_{2,2},\\
x_6=s_{2,1},\ x_7=s_2,\
x_8=s_{3,1},\ x_9=s_{3,2},\ x_{10}=s_{3,3}.
\end{align*}

In an $\al$-tiling we let
\begin{align*}
b_i &= \text{the number of black tiles in row $i$,}\\
r_i &=  \text{the number of red tiles whose head is in row $i$.}\\
\end{align*}
So, in the first tiling of Figure~\ref{uF(4,3,4)},
$$
\barr{c|c|c}
i\   & b_i\   & r_i\ \\
\hline
1   & 4     & 1\\
2   & 5     & 1\\
3   & 4     & 0
\earr.
$$
We let $b_i=r_i=0$ if $i\not\in[s]$, where $s$ is the number of rows of the tiling.
Finally, we need the {\em Kronecker function} which, given a statement $R$, evaluates to
$$
\de(R) = \case{1}{if $R$ is true,}{0}{if $R$ is false.}
$$

\begin{lem}
\label{stats}
Let $\al=(\al_1,\al_2,\ldots,\al_s)$ with corresponding fence $F=\uF(\al)$.  Let $\cO$ be an orbit of antichains $A$ with tiling $T=\phi(\cO)$.  We denote by $I$ the ideal generated by $A$.
\begin{enumerate}
    \item[(a)]  For any $i\in[s]$ with $\al_i\ge2$ we have
    $$
    \#\cO = b_i\al_i + r_i+r_{i-1}.
    $$
    \item[(b)] Considering $\cO$ as an orbit of antichains,
    $$
    \chi_x(\cO)=
    \begin{cases}
    b_i &\text{if $x\in \uS_i$,}\\
    r_i &\text{if $x=s_i$}
    \end{cases}
    $$
    \item[(c)]  Considering $\cO$ as an orbit of ideals,
    $$
    \chih_x(\cO)=
    \begin{cases}
    b_i(\al_i-j)+r_{i-\delta(\text{$i$ even})} &\text{if $x= s_{i,j}$,}\\
    r_{2i-1}&\text{if $x=s_{2i-1}$,}\\
    \#\cO - r_{2i} &\text{if $x=s_{2i}$.}
    \end{cases}
    $$
    \item[(d)]  Considering $\cO$ as an orbit of antichains,
    $$
    \chi(\cO) = \sum_{i=1}^{s} (b_i\al_i-b_i + r_i).
    $$
    \item[(e)]  Considering $\cO$ as an orbit of ideals,    
    $$
    \chih(\cO) = \left\lfloor \frac{s-1}{2} \right\rfloor \cdot\#\cO
    + \sum_{i=1}^{s} \left[b_i \binom{\al_i}{2}
    +r_{2i-1}(\al_{2i-1}+\al_{2i}-1) -r_{2i} \right].
    $$
\end{enumerate}
\end{lem}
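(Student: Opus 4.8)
The plan is to derive everything from the bijection $\phi$ of Lemma~\ref{tiling}, under which the columns of $T=\phi(\cO)$ are exactly the antichains of $\cO$ and a square in row $i$ records how $A$ meets the segment $S_i$: yellow means $A\cap S_i=\emp$, the $j$th cell of a black tile means $s_{i,j}\in A$, and a red tile with head in row $i$ means $s_i\in A$. Part~(b) is then immediate: as $A$ ranges over $\cO$, the element $s_{i,j}$ is selected once by each black tile of row $i$, so $\chi_{s_{i,j}}(\cO)=b_i$ for every $j$, and $s_i$ is selected once by each red tile headed in row $i$, so $\chi_{s_i}(\cO)=r_i$. I would then prove (a) by counting the $\#\cO$ cells of row $i$ by color: the $b_i$ black tiles use $b_i(\al_i-1)$ cells, the red cells number $r_i+r_{i-1}$ (reds headed in row $i$ together with reds headed in row $i-1$ dipping down), and condition~(a) of Definition~\ref{TilDef} forces the black and yellow tiles to alternate cyclically, so there are exactly $b_i$ yellow cells; summing gives $\#\cO=b_i(\al_i-1)+b_i+r_i+r_{i-1}=b_i\al_i+r_i+r_{i-1}$. (One must exclude a row with no black tile: condition~(b) shows a red cell always forces an adjacent yellow in the same row, and two yellows force a black between them, so $\al_i\ge2$ makes $b_i\ge1$.) Part~(d) then follows by summing (b) over all $x\in F$: the unshared elements of $S_i$ contribute $(\al_i-1)b_i$ and the shared elements contribute $\sum_i r_i$ (using $r_s=0$), giving $\chi(\cO)=\sum_i(b_i\al_i-b_i+r_i)$.

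The crux is part~(c). The key structural observation I would isolate is that in a fence the principal filter $\{y:y\ge x\}$ of any \emph{unshared} element, or of any \emph{peak} $s_{2i-1}$, is a \emph{chain}: going up from such an $x$ proceeds monotonically inside one segment until it reaches a peak, which is maximal. Since an antichain meets a chain in at most one element, for such $x$ we get $\chih_x(I)=\de(x\in\langle A\rangle)=\sum_{y\ge x}\chi_y(A)$, and summing over $\cO$ reduces $\chih_x(\cO)$ to a sum of the quantities from part~(b). For $x=s_{i,j}$ the filter is $\{s_{i,j},\dots,s_{i,\al_i-1}\}$ together with the peak ($s_i$ if $i$ is odd, $s_{i-1}$ if $i$ is even), which yields $(\al_i-j)b_i+r_{i-\de(i\text{ even})}$; for a peak $x=s_{2i-1}$ the filter is $\{x\}$ and we get $r_{2i-1}$.

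The genuinely different case, and what I expect to be the main obstacle, is a \emph{valley} $x=s_{2i}$: here $\{y\ge s_{2i}\}=S_{2i}\cup S_{2i+1}$ branches into two chains, so an antichain may meet it twice and the filter-sum identity fails. Instead I would count the complement: $s_{2i}\notin\langle A\rangle$ exactly when $A$ meets neither $S_{2i}$ nor $S_{2i+1}$, which in the tiling means that both rows $2i$ and $2i+1$ of that column are yellow. Condition~(b) of Definition~\ref{TilDef}, applied to the even index $2i$, sets up a shift-by-one bijection between columns carrying a red tile headed in row $2i$ and columns whose predecessor has two yellows in rows $2i,2i+1$; on the cylinder this gives exactly $r_{2i}$ such columns, so $\chih_{s_{2i}}(\cO)=\#\cO-r_{2i}$.

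Finally, part~(e) is the sum of part~(c) over all $x\in F$. The black contributions use $\sum_{j=1}^{\al_i-1}(\al_i-j)=\binom{\al_i}{2}$ to give $\sum_i b_i\binom{\al_i}{2}$; the $r$-terms coming from the filters of unshared elements reindex by parity, contributing $(\al_{2i-1}-1)r_{2i-1}$ from odd segments and $(\al_{2i}-1)r_{2i-1}$ from even segments, and combine with the peak contributions $\sum_i r_{2i-1}$ to give $\sum_i(\al_{2i-1}+\al_{2i}-1)r_{2i-1}$; and the $\lfloor(s-1)/2\rfloor$ valleys each contribute $\#\cO-r_{2i}$, producing $\lfloor(s-1)/2\rfloor\cdot\#\cO-\sum_i r_{2i}$. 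The only care required is boundary bookkeeping, which is handled by the stated convention $b_i=r_i=0$ for $i\notin[s]$.
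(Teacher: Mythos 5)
Your proposal is correct and follows essentially the same route as the paper's proof: (b) read off from the bijection $\phi$, (a) by counting the cells of row $i$ by color, (c) via the identity $\chih_x(\cO)=\sum_{y\gte x}\chi_y(\cO)$ for unshared elements and peaks together with the complement count (pairs of yellow cells in rows $2i,2i+1$ matched to red tiles via Definition~\ref{TilDef}~(b)) for valleys $s_{2i}$, and (d), (e) by summation. If anything, you are slightly more careful than the paper, which states the filter-sum identity without the chain caveat and which leaves the $b_i\ge1$ point and the bookkeeping in (e) implicit.
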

\begin{proof}
(a)  Since every black tile in row $i$ has $\al_i-1$ squares, the number of black squares in that row is $b_i(\al_i-1)$.  From Definition~\ref{TilDef} (a), the number of yellow tiles in row $i$ is also $b_i$ as long as $\al_i\ge2$.  Finally, there are red squares in row $i$ from both red tiles whose head is in row $i-1$ and those whose head is in row $i$.  Since the size of the orbit is the number of squares in row $i$, we have
$$
\#\cO=b_i(\al_i-1)+b_i+r_{i-1}+r_i,
$$
which simplifies to the given quantity.

\medskip

(b)  From the proof of Lemma~\ref{tiling}, we see that if $x\in\uS_i$ then a square of $T$ corresponding to $x$ appears exactly once in each black tile in row $i$.  So the first case follows. 
On the other hand, the appearances of $x=s_i$, the unique element of $S_i\cap S_{i+1}$, in $\cO$ are in bijection with red tiles covering rows $i$ and $i+1$.  But that is exactly what is counted by $r_i$.

\medskip

(c)  An element $x$ will appear in an ideal $I$ if and only if $x\lte y$ for some element $y$ of the antichain of maximial elements of $I$.  So
\begin{equation}
\label{chih_x}
    \chih_x(\cO) = \sum_{y\gte x} \chi_y(\cO).
\end{equation}

First consider the case when $x=s_{i,j}$.  Then $y\gte x$ if and only if $y=s_{i,k}$ for $k\ge j$ or $y$ is the maximal element of $S_i$.  And the latter occurs if $y=s_i$ when $i$ is odd or $y=s_{i-1}$ if $i$ is even.  Since $s_{i,k}$ corresponds to the $k$th black square of a black tile, there are $(\al_i-1)-(j-1)=\al_i-j$ elements in a given black tile satisfying the first possibility when $y\gte x$.  And for the second, we have a unique tile with its head in the appropriate row each time $y$ appears.  Combining this with equation~\eqref{chih_x} and part (b) gives the first case of the desired formula.

When $x$ is shared, it is either a maximal element and thus of the form $x=s_{2i-1}$, or a minimal element and so $x=s_{2i}$ for some $i$.  Maximal elements of $F$ are only in an ideal when they are in the corresponding antichain.  These correspond to the red tiles counted by $r_{2i-1}$.  A minimal element $x=s_{2i}$ will be in every ideal, of which there are $\#\cO$, except the ones whose antichain contains no elements in either $S_{2i}$ or $S_{2i+1}$.  In $T$ this corresponds to having yellow tiles in a given column in both rows $2i$ and $2i+1$.  But by part (b) of the definition of $\al$-tilings, these pairs of yellow tiles are in bijection with red tiles having their head in row $2i$.  This accounts for the $-r_{2i}$ term in the expression we wished to obtain.

\medskip

(d) Using the counts from (b) we obtain
\begin{align*}
\chi(\cO)
&=\sum_{x\in F} \chi_x(\cO)\\
&=\sum_{i=1}^s \left[\sum_{x\in\uS_i} b_i+\sum_{x=s_i} r_i\right]\\
&=\sum_{i=1}^s [b_i(\al_i-1)+r_i]
\end{align*}
as desired.

\medskip

(e) This equation follows from (c) in much the same way that (d) was obtained from (b).  So the details are omitted.
\end{proof}


\section{Homomesies for arbitrary and self-dual fences}
\label{has}

We now use Lemma~\ref{stats} to demonstrate various homomesy  results that hold for arbitrary fences.  We also prove a lemma about rowmotion on an arbitrary self-dual poset which will be useful in the sequel.

\begin{thm}
\label{homo}
Let $\al=(\al_1,\al_2,\ldots,\al_s)$ with corresponding fence $F=\uF(\al)$ and let $\cO$ be any rowmotion orbit of $F$.
\begin{enumerate}
    \item[(a)] If $x,y\in\uS_i$ for some $i$, then $\chi_x-\chi_y$ is $0$-mesic.
    \item[(b)]  If $x\in\uS_i$, $y=s_i$ and $z=s_{i-1}$, then $\al_i\chi_x+\chi_y+\chi_z$ is $1$-mesic. 
        \item[(c)] If $x=s_{1,j}$ and $y=s_{1,k}$, then $k\chih_x-j\chih_y$ is $(k-j)$-mesic.
    \item[(d)] Let $x=s_{2i-1}$ and $y=s_{2j}$.  If $r_{2i-1}=r_{2j}$ for all orbits $\cO$, then
    $\chih_x+\chih_y$ is $1$-mesic.
    \item[(e)]  If $s$ is odd and all the $\al_i$ are even, then $\#\cO$ is even for all orbits $\cO$.
    \item[(f)]  If $\al_i=2$ for all $i\in[s]$, then $\chi$ is $s/2$-mesic.
\end{enumerate}
\end{thm}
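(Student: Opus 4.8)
The plan is to derive all six parts directly from Lemma~\ref{stats}, translating the ``$d$-mesic'' condition into the per-orbit identity $\st\cO = d\cdot\#\cO$ and checking it on an arbitrary fixed orbit $\cO$ with tiling $T=\phi(\cO)$. The single engine behind every statement is the master relation of Lemma~\ref{stats}(a), namely $\#\cO = b_i\al_i + r_i + r_{i-1}$, valid for every $i$ with $\al_i\ge2$, combined with the per-element counts in parts (b) and (c) and the two boundary facts $r_0=0$ (from the stated convention $r_i=0$ for $i\notin[s]$) and $r_s=0$ (a red tile with head in row $s$ would require a nonexistent row $s+1$).

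Parts (a)--(d) I would handle by direct substitution. For (a), both $x,y\in\uS_i$ give $\chi_x(\cO)=\chi_y(\cO)=b_i$ by Lemma~\ref{stats}(b), so the difference vanishes and the statistic is $0$-mesic. For (b), the same lemma yields $\al_i\chi_x(\cO)+\chi_y(\cO)+\chi_z(\cO)=\al_i b_i+r_i+r_{i-1}$, which is exactly $\#\cO$ by the master relation; note $x\in\uS_i$ forces $\al_i\ge2$, and when $i=1$ the terms $\chi_z$ and $r_0$ both vanish. For (c), since $i=1$ is odd Lemma~\ref{stats}(c) gives $\chih_{s_{1,j}}(\cO)=b_1(\al_1-j)+r_1$, and on forming $k\chih_x(\cO)-j\chih_y(\cO)$ the cross terms $kjb_1$ cancel, leaving $(k-j)(b_1\al_1+r_1)=(k-j)\#\cO$ by the master relation at $i=1$ with $r_0=0$. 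For (d), Lemma~\ref{stats}(c) gives $\chih_x(\cO)=r_{2i-1}$ and $\chih_y(\cO)=\#\cO-r_{2j}$, so the hypothesis $r_{2i-1}=r_{2j}$ makes their sum equal $\#\cO$.

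The remaining parts (e) and (f) require combining the master relation across all rows rather than at a single $i$. For (f), where every $\al_i=2$, Lemma~\ref{stats}(d) collapses to $\chi(\cO)=\sum_{i=1}^s(b_i+r_i)$, while summing the master relation over $i\in[s]$ gives $s\cdot\#\cO=2\sum_i b_i+\sum_i r_i+\sum_i r_{i-1}$; the two red-tile sums coincide because $\sum_{i=1}^s r_{i-1}=\sum_{i=0}^{s-1}r_i=\sum_{i=1}^{s-1}r_i=\sum_{i=1}^s r_i$ using $r_0=r_s=0$, whence $s\cdot\#\cO=2\chi(\cO)$ and $\chi$ is $s/2$-mesic. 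For (e), I would reduce the master relation modulo $2$: since each $\al_i$ is even, $\#\cO\equiv r_i+r_{i-1}\equiv r_i-r_{i-1}\pmod 2$ for every $i\in[s]$, and summing over $i$ telescopes the right side to $r_s-r_0=0$, while the left side is $s\cdot\#\cO\equiv\#\cO\pmod 2$ because $s$ is odd; hence $\#\cO$ is even.

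I do not expect a serious obstacle, since Lemma~\ref{stats} already does the heavy lifting and each part reduces to a one-line identity. The only points demanding genuine care are the boundary values $r_0=r_s=0$, which are precisely what make the telescoping in (e) and the sum-of-red-tiles cancellation in (f) come out cleanly, and the small trick in (e) of rewriting $r_i+r_{i-1}$ as $r_i-r_{i-1}$ modulo $2$ so that the $s$ separate congruences collapse into a single telescoping sum.
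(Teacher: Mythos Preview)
Your argument is correct and for parts (a)--(d) and (f) is essentially identical to the paper's proof: direct substitution of the counts in Lemma~\ref{stats}(b),(c),(d) into the master relation Lemma~\ref{stats}(a), with the boundary values $r_0=r_s=0$ handling the reindexing in (f) exactly as the paper does.

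The one genuine difference is part (e). The paper argues combinatorially by counting squares in the tiling $T=\phi(\cO)$ two ways: the hypothesis $\al_i\ge2$ forces black tiles in every row, so Definition~\ref{TilDef}(a) pairs each black tile with a yellow tile, and each such pair covers $\al_i$ (even) squares; adding the $2$-square red tiles gives an even total, which must equal $s\cdot\#\cO$, forcing $\#\cO$ even since $s$ is odd. Your route is purely algebraic: reduce $\#\cO=b_i\al_i+r_i+r_{i-1}$ modulo $2$, telescope over $i$, and use $r_0=r_s=0$. Both are short; your version stays entirely within the framework of Lemma~\ref{stats} and avoids revisiting the tiling definition, while the paper's version is a direct double count that makes the parity visible without any summation. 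Either is perfectly acceptable.
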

\begin{proof}
(a)  By Lemma~\ref{stats} (b) we have
$\chi_x(\cO)=b_i=\chi_y(\cO)$ and the result follows.

\medskip

(b)  Using Lemma~\ref{stats} (a) and (b) we obtain
$$
\al_i\chi_x(\cO)+\chi_y(\cO)+\chi_z(\cO)
=\al_i b_i + r_i + r_{i-1} =\#\cO,
$$
which is equivalent to what we wished to prove.

\medskip

(c)  From  Lemma~\ref{stats} (a) and (c) we see that
\begin{align*}
k\chih_x(\cO)-j\chih_y(\cO)
&=k[b_1(\al_1-j)+r_1] - j [b_1(\al_1-k)+r_1]\\
&=(k-j)[b_1\al_1+r_1]\\
&=(k-j)\cdot\#\cO.
\end{align*}

\medskip

(d) By Lemma~\ref{stats} (a) and (c) again, as well as the assumption on red tiles,
$$
\chih_x(\cO)+\chih_y(\cO) = r_{2i-1}+(\#\cO-r_{2j}) =\#\cO.
$$

\medskip

(e)  We will calculate the number of squares in the tiling $T=\phi(\cO)$ in two different ways.  Since all the $\al_i$ are even, we have that $\al_i-1\ge 1$ for all $i$.  This implies that there are black tiles in every row.  From Definition~\ref{TilDef} (a) we see that there is a matching between the black tiles and the yellow tiles in row $i$ for all $i$.  And a matched pair covers $\al_i$ squares, which is an even number.  By definition, red tiles cover two squares.  So the total number of squares in $T$ is even.

But since $T$ lies on an $s\times\#\cO$ cylinder, the number of squares in $T$ is also $s\cdot\#\cO$.  The fact that $s$ is odd forces $\#\cO$ to be even.

\medskip

(f)  From Lemma~\ref{stats} (a) and the assumption on $\al$ 
we have
$$
2 b_i + r_i + r_{i-1} =\#\cO
$$
for all orbits $\cO$ and all $i\in[s]$.  Summing these equations and using Lemma~\ref{stats} (b) gives
$$
s\cdot\#\cO
=2\sum_i b_i + \sum_i r_i +\sum_i r_{i-1}
=2\left(\sum_i b_i + \sum_i r_i\right)
=2\cdot\chi(\cO)
$$
from which the desired homomesy follows.
\end{proof}

\bfi
\begin{tikzpicture}
\draw (0,1)--(2,3)--(5,0)--(7,2);
\fill(0,1) circle(.1);
\draw[fill=white](1,2) circle(.1);
\draw[fill=white](2,3) circle(.1);
\fill(3,2) circle(.1);
\fill(4,1) circle(.1);
\fill(5,0) circle(.1);
\draw[fill=white](6,1) circle(.1);
\draw[fill=white](7,2) circle(.1);
\draw(2,0) node{$I$};
\draw(8,1.5) node{$\mapsto$};
\end{tikzpicture}
\begin{tikzpicture}
\draw (0,1)--(2,3)--(5,0)--(7,2);
\fill(0,1) circle(.1);
\fill(1,2) circle(.1);
\draw[fill=white](2,3) circle(.1);
\draw[fill=white](3,2) circle(.1);
\draw[fill=white](4,1) circle(.1);
\fill(5,0) circle(.1);
\fill(6,1) circle(.1);
\draw[fill=white](7,2) circle(.1);
\draw(2,0) node{$\Ib$};
\end{tikzpicture}
\capt{The ideal complement map \label{Ibar}}
\efi

We now prove a general result about the $\chih$ statistic in orbits of self-dual posets which we will use later.  To state it, we need some definitions.  Suppose our poset $P$ is self-dual. In particular, this will be true if $P=F(\al)$ where $\al$ is a palindrome (that is, equal to its reversal) with an odd number of parts.  So there is an order-reversing  bijection $\ka:P\ra P$.  Note that  $I\in\cI(P)$ if and only if $\ka(I)\in\cU(P)$.
This permits us to define the {\em ideal complement} (with respect to $\ka$) of $I\in\cI(P)$ as
$$
\Ib = \ka\circ c(I).
$$
See Figure~\ref{Ibar} for an example in $\uF(3,3,3)$ where circles are black or white depending on whether they are in the ideal or not, respectively.  The relationship with rowmotion is as follows.
\ble
\label{rho+bar}
Let $P$ be self-dual and fix an order-reversing bijection 
$\ka:P\ra P$.  Then for all $I\in\cI(P)$ we have
$$
\rhoh^{-1}(\Ib) = \ol{\rhoh(I)},
$$
where the ideal complements are with respect to $\ka$.
\ele
\begin{proof}
Since  $\ka$ and $c$ commute, they can be applied to get the ideal complement in either order.
And $\rhoh =  \De^{-1} \circ \na \circ c$,  so it suffices to show that the following diagram commutes:
$$
\begin{tikzcd}
I \arrow[r, "c"] 
& U \arrow[r, "\nabla"] \arrow[d, "\kappa"] 
& A \arrow[r, "\Delta^{-1}"] \arrow[d, "\kappa"] 
& J \arrow[d, "\kappa"] &   \\
& \overline{I} \arrow[r, "\De"]          
& B \arrow[r, "\nabla^{-1}"]                       
& V \arrow[r, "c"]   
& K.
\end{tikzcd}
$$

To see that the first square of the diagram commutes, recall that $\ka$ is an order-reversing bijection.  Thus it sends the minimal elements of any subset $S\sbe P$ bijectively to the maximal elements of $\ka(S)$.  The proof of commutativity for the second square is similar and so omitted.
\end{proof}

\begin{cor}
\label{DualOrb}
Let $P$ be self-dual with $n=\#P$, and fix an order-reversing bijection $\ka:P\ra P$.  Let $I\in\cI(P)$.
\begin{enumerate}
    \item[(a)] If  $I,\Ib\in\cO$ for some orbit $\cO$, then
    $$
    \frac{\chih(\cO)}{\#\cO}= \frac{n}{2}.
    $$
    \item[(b)] If $I\in\cO$ and $\Ib\in\cOb$ for some orbits $\cO$ and $\cOb$  with $\cO\neq\cOb$, then $\#\cO=\#\cOb$ and
    $$
    \frac{\chih(\cO)+\chih(\cOb)}{\#\cO+\#\cOb}= \frac{n}{2}.
    $$
\end{enumerate}
\end{cor}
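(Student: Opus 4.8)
The plan is to upgrade Lemma~\ref{rho+bar} from a one-step statement about $\rhoh$ to a statement about entire orbits, and then combine this with the elementary size-reversing property of the ideal complement. First I would record that size fact: since $\ka$ is a bijection, $\#\Ib = \#\ka(c(I)) = \#c(I) = n-\#I$, so that $\chih(I)+\chih(\Ib)=n$ for every $I\in\cI(P)$, where $\chih(I)=\#I$.

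Next I would iterate Lemma~\ref{rho+bar}. Writing $I_k=\rhoh^k(I)$, the lemma gives $\ol{I_{k+1}}=\ol{\rhoh(I_k)}=\rhoh^{-1}(\ol{I_k})$, and hence by induction $\ol{I_k}=\rhoh^{-k}(\Ib)$ (the base case $k=0$ being $\ol{I_0}=\Ib$). This shows that the ideal complement carries the orbit $\cO=\{I_0,I_1,\dots\}$ bijectively onto the $\rhoh$-orbit of $\Ib$, traversed in the opposite direction. Since the bar map is a bijection of $\cI(P)$ (a composite of the bijections $\ka$ and $c$), this is the structural fact driving both parts: the ideal complement induces a size-preserving bijection between the orbit of $I$ and the orbit of $\Ib$.

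For part (a), the hypothesis $\Ib\in\cO$ means the orbit of $\Ib$ is $\cO$ itself, so the bar map restricts to a bijection $\cO\to\cO$. Summing the identity $\chih(I)+\chih(\Ib)=n$ across this bijection gives $2\chih(\cO)=\sum_{I\in\cO}(\#I+\#\Ib)=n\cdot\#\cO$, which rearranges to $\chih(\cO)/\#\cO=n/2$. For part (b), the hypothesis $\Ib\in\cOb$ with $\cO\neq\cOb$ means the orbit of $\Ib$ is $\cOb$, so the structural fact yields a bijection $\cO\to\cOb$; in particular $\#\cO=\#\cOb$. Pairing each $I\in\cO$ with $\Ib\in\cOb$ and summing $\#I+\#\Ib=n$ gives $\chih(\cO)+\chih(\cOb)=n\cdot\#\cO$, and dividing by $\#\cO+\#\cOb=2\#\cO$ gives $n/2$.

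The only real content is the iteration step, so the main thing to get right is the bookkeeping in $\ol{I_k}=\rhoh^{-k}(\Ib)$ together with the observation that $\Ib$ lying in a given orbit forces the \emph{entire} bar-image of $\cO$ to be that orbit; everything afterward is a one-line summation. I do not expect to need $\ka$ to be an involution anywhere, only that it is a bijection, which is exactly what makes the bar map injective and the size identity $\#\Ib=n-\#I$ hold.
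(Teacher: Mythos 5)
Your proof is correct and follows essentially the same route as the paper: iterate Lemma~\ref{rho+bar} to conclude that the ideal complement carries the orbit of $I$ bijectively onto the orbit of $\Ib$ (reversing the direction of rowmotion), then combine with the identity $\chih(J)+\chih(\Jb)=n$. The only difference is bookkeeping at the end of part (a): the paper partitions $\cO$ into pairs $\{J,\Jb\}$ and singletons (implicitly using that the complement map restricted to $\cO$ is an involution, which holds because it acts on indices as $I_k\mapsto I_{c-k}$ where $\Ib=I_c$), whereas you sum the identity over the bijection $\cO\to\cO$ directly, which sidesteps that observation entirely.
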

\begin{proof}
(a)  We claim that the hypothesis on $\cO$ implies that, for any $J\in\cO$, we must have $\Jb\in\cO$.  Indeed, $J=\rhoh^j(I)$ for some integer $j$.  So, by Lemma~\ref{rho+bar},
$$
\Jb=\ol{\rhoh^j(I)}=\rhoh^{-j}(\Ib)\in\cO,
$$
since $\Ib\in\cO$.  Thus $\cO$ can be partitioned into pairs $\{I,\Ib\}$ with $I\neq \Ib$, and singletons $\{I\}$ with $I=\Ib$.  In each pair and singleton, the average of $\chih$ is $n/2$, so the same is true of the orbit.

\medskip

(b) Suppose $\cO=\{I_1,\ldots,I_k\}$.  Similar reasoning as in (a) shows that the hypothesis on $I,\Ib$ implies that
$\cOb=\{\Ib_1,\ldots,\Ib_k\}$.  Now the average of $\chih$ in each pair $\{I_i,\Ib_i\}$ is equal to $n/2$, which implies the same is true of $\cO\uplus\cOb$.
\end{proof}

To turn this last result into a homomesy, take a self-dual $P$ with a given order-reversing bijection  $\ka:P\ra P$.
Consider the group generated by the action of $\rhoh$ and the ideal complement map $I\mapsto\Ib$.  The orbits of this action will be called {\em dihedral group orbits}.  Note that from the proof of Corollary~\ref{DualOrb},
each dihedral group orbit is either an orbit of the action of $\rhoh$ or a union of two such orbits.  
The next result follows immediately from Corollary~\ref{DualOrb}.
\begin{thm}
\label{n/2-mes}
Let $P$ is self-dual with $n=\#P$ and fix an order-reversing bijection $\ka:P\ra P$.  Then $\chih$ is $(n/2)$-mesic on dihedral group orbits.\hqed
\end{thm}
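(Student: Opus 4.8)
The plan is to deduce the theorem directly from Corollary~\ref{DualOrb} by running through the two possible shapes of a superorbit. Recall that $\chih$ being $(n/2)$-mesic on superorbits means precisely that for every superorbit $\cS$ one has $\chih(\cS)/\#\cS = n/2$, where $\chih(\cS)=\sum_{I\in\cS}\chih(I)$. So the entire task is to compute this average for an arbitrary $\cS$.

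First I would recall the structure of superorbits noted just before the statement: by Lemma~\ref{rho+bar} the ideal complement map conjugates $\rhoh$ to $\rhoh^{-1}$, so it carries each $\rhoh$-orbit to an $\rhoh$-orbit. Consequently every superorbit $\cS$ is obtained from a single $\rhoh$-orbit $\cO$ by closing up under the complement map, and exactly one of two things happens: either $\cO$ is already closed under complementation, in which case $\cS=\cO$; or complementation carries $\cO$ to a distinct orbit $\cOb$, in which case $\cS=\cO\uplus\cOb$.

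In the first case the hypothesis of Corollary~\ref{DualOrb}(a) is met, since some (hence, as shown in that proof, every) $I\in\cO$ has $\Ib\in\cO$; thus $\chih(\cO)/\#\cO=n/2$, and we are done because $\cS=\cO$. In the second case we are in the setting of Corollary~\ref{DualOrb}(b) with $I\in\cO$ and $\Ib\in\cOb$, which gives $\#\cO=\#\cOb$ and
$$
\frac{\chih(\cO)+\chih(\cOb)}{\#\cO+\#\cOb}=\frac{n}{2};
$$
since $\cS=\cO\uplus\cOb$, this is exactly $\chih(\cS)/\#\cS=n/2$. As the two cases are exhaustive, $\chih$ is $(n/2)$-mesic on superorbits.

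The only point that requires care — and the place where I expect any real work to hide — is verifying that the dichotomy above is genuinely exhaustive, i.e., that a superorbit can never be built from three or more $\rhoh$-orbits. This reduces to the complement map pairing orbits two at a time, which in turn rests on the interaction of $\ka$ and $c$ recorded in Lemma~\ref{rho+bar}; once that is in hand, the computation is immediate and no new estimates are needed, so the statement is genuinely an immediate corollary of the preceding work.
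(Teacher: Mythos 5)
Your proposal is correct and follows essentially the same route as the paper: the paper notes just before the theorem that each superorbit is either a single $\rhoh$-orbit or a union of two, and then declares the result an immediate consequence of Corollary~\ref{DualOrb}, which is exactly your two-case analysis via parts (a) and (b). The exhaustiveness of the dichotomy that you flag as the delicate point is precisely what the paper extracts from the proof of Corollary~\ref{DualOrb} (the complement map sends $\rhoh$-orbits to $\rhoh$-orbits and pairs them), so your treatment matches the paper's in both substance and level of detail.
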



\section{Fences with few segments}
\label{ffs}

In this section we will consider fences with at most $5$ segments.  For certain compositions $\al$, we will completely describe the orbit sizes and the number of orbits.  We will also calculate the values of $\chi$ and $\chih$, revealing a number of homometries.

We will need an expression for the number of ideals (equivalently, antichains) in $F=\uF(\al)$.  This is provided by the next lemma.
\begin{lem}\label{recurrenceI}
Let  $\al=(\al_1,\ldots,\al_s)$ where $s\ge2$.  Then
$$
\#\cI(\al)=\al_s\cdot\#\cI(\al_1,\ldots,\al_{s-1})
+\#\cI(\al_1,\ldots,\al_{s-2}).
$$
\end{lem}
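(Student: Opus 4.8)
The plan is to count antichains instead of ideals, using the bijection $\De:\cI(F)\to\cA(F)$ recorded above, which gives $\#\cI(\al)=\#\cA(\al)$ for every composition. The advantage of working with antichains is that the count becomes insensitive to the parity of $s$, whereas a direct argument on ideals would split into cases according to whether the last shared vertex $s_{s-1}$ is a peak or a valley. Throughout I write $F=\uF(\al_1,\dots,\al_s)$, and I set $G=\uF(\al_1,\dots,\al_{s-1})$ and $H=\uF(\al_1,\dots,\al_{s-2})$; the hypothesis $s\ge3$ guarantees that both are genuine fences. The first bookkeeping step is to check that $G$ is obtained from $F$ by deleting the entire last segment $S_s$ (all $\al_s$ of its elements, including the shared vertex $s_{s-1}$), so that $G=F\setminus S_s$, and likewise $H=G\setminus(\text{last segment of }G)=F\setminus(S_{s-1}\cup S_s)$. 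The cardinality formula $\#\uF(\be)=\be_1+\dots+\be_k-1$ makes these identifications precise.

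The key structural fact I would use is that in any fence two elements are comparable if and only if they lie on a common segment: a saturated chain joining two comparable elements is monotone, hence contained in a single maximal chain. In particular, every unshared element of $S_s$ is comparable to nothing in $G$, while $s_{s-1}$ is comparable, among the elements of $G$, to exactly the elements of the last segment $S_{s-1}\setminus\{s_{s-1}\}$ of $G$.

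Then I would decompose $\cA(F)$ according to $A\cap S_s$, which is either empty or a single element since $S_s$ is a chain. If $A\cap S_s$ is empty or equals one of the $\al_s-1$ unshared elements of $S_s$, then that element (if present) is incomparable to all of $G$, so $A\cap G$ may be an arbitrary antichain of $G$; these cases contribute $\al_s\cdot\#\cA(G)$ antichains. If instead $A\cap S_s=\{s_{s-1}\}$, then $A\cap G$ must avoid everything comparable to $s_{s-1}$, namely the whole last segment of $G$, so $A\cap G$ is precisely an antichain supported on $H$; since comparabilities among elements of $H$ are the same in $G$ as in $H$, this case contributes $\#\cA(H)$. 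Summing over the three cases gives $\#\cA(F)=\al_s\,\#\cA(G)+\#\cA(H)$, which is the desired identity after translating back to ideals.

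I expect the main obstacle to be the bookkeeping of shared vertices when passing between $F$, $G$, and $H$: the off-by-one created by a vertex that is shared in $F$ but becomes the unshared endpoint of (or disappears from) the smaller fence is exactly what one must pin down, and it is easy to mis-identify $\uF(\al_1,\dots,\al_{s-1})$ with a deletion that keeps or drops $s_{s-1}$. I would therefore verify the identity on a small case such as $\uF(2,2,2)$, where it reads $12=2\cdot 5+2$, to make sure the cardinalities line up correctly.
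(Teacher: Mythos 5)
Your proof is correct, but it takes a genuinely different route from the paper's. The paper argues directly on ideals: assuming $s$ odd, so that the last shared element $x=s_{s-1}$ is minimal, it splits $\cI(\al)$ according to whether $x\in I$. Ideals containing $x$ biject (by deleting $x$) with ideals of $F-\{x\}\iso\uF(\al_1,\ldots,\al_{s-1})\uplus C$, where $C$ is the chain of the $\al_s-1$ unshared elements of $S_s$, and the product formula $\cI(P\uplus Q)=\cI(P)\times\cI(Q)$ yields the factor $\al_s$; ideals avoiding $x$ are exactly those supported on $F\setminus(S_{s-1}\cup S_s)$, giving the second term; the even case is left as ``similar'' (there $x$ is maximal, so the argument must be rearranged). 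Your decomposition is not the image of the paper's under $\De$ --- in fact the cases get crossed: your antichains containing $s_{s-1}$ produce the $\#\cI(\al_1,\ldots,\al_{s-2})$ term, whereas the paper's ideals containing $s_{s-1}$ produce the $\al_s\cdot\#\cI(\al_1,\ldots,\al_{s-1})$ term. What your version buys is parity uniformity: an antichain meets the chain $S_s$ in at most one element whether that segment ascends or descends, so no odd/even split is needed, and your key structural fact (comparability in a fence means lying on a common segment, via monotonicity of saturated chains in a path-shaped Hasse diagram) is clean and correct. What the paper's version buys is that it never leaves the ideals the lemma is about and needs nothing beyond the disjoint-union product formula. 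One caveat you share with the paper: if $\al_{s-1}=1$ or $\al_{s-2}=1$ (which does occur in later applications, e.g.\ $\uF(a,1,a,1,a)$), the truncated compositions end in a part equal to $1$ and so are not ``genuine fences'' under the standing convention $\al_1,\al_s\ge 2$; both proofs implicitly extend the meaning of $\uF$ to such compositions, and your argument still goes through since it never uses that the last segment of $G$ or $H$ has an unshared element.
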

\begin{proof}
We will assume that $s$ is odd; the proof in the case where $s$ even is similar. 
Let $x$ be the common element of the last two segments which, by the assumption on $s$, is a minimal element of $F=\uF(\al)$.  Then for any $I\in\cI(\al)$ we have $x\in I$ or $x\not\in I$.  Since $x$ is minimal, the ideals with $x\in I$ are in bijection with the ideals of $F-\{x\}\iso\uF(\al_1,\ldots,\al_{s-1})\uplus\uF(\al_s-1)$.  
Since for any two  posets $P,Q$  we have $\cI(P\uplus Q) =\cI(P)\times \cI(Q)$,
this accounts for the first term of the recursion.  On the other hand, if $x\not\in I$, then $I$ does not contain any element on either of the last two segments of $F$.  So these ideals contribute the second term, and we are done.
\end{proof}

The following corollary is easy to obtain from Lemma~\ref{recurrenceI} using straightforward computations, and so it is given without proof. 
\begin{cor}
\label{ideals}
We have the following ideal counts.
\begin{enumerate}
    \item[(a)] If $\al=(a,b)$, then
    $$
    \#\cI(\al) = ab+1.
    $$
    \item[(b)] If $\al=(a,b,c)$, then
    $$
    \#\cI(\al) = abc+a+c.
    $$
    \item[(c)] If $\al=(a,b,c,d)$, then
    $$
    \#\cI(\al) = abcd+ab+ad+cd+1.
    $$
        \item[(d)] If $\al=(a,b,c,d,e)$, then

    \eqed{
    \#\cI(\al) = abcde+abc+abe+ade+cde+a+c+e.
    }
\end{enumerate}
\end{cor}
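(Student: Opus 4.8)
The plan is to derive all four ideal-count formulas as a direct computation from the recurrence in Lemma~\ref{recurrenceI}, building up from the base cases $s=1$ and $s=2$. First I would record the trivial base cases: a single segment $\uF(a)$ is a chain with $a$ elements, so $\#\cI(a)=a+1$, and part~(a) gives $\#\cI(a,b)=ab+1$. These two serve as the anchors for the recursion. I would then apply Lemma~\ref{recurrenceI} repeatedly, peeling off one part at a time from the right, to obtain parts~(b), (c), and (d) in succession. Each step is a single substitution into the recurrence, so the proof is purely mechanical.

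Concretely, for part~(b) I would compute
\[
\#\cI(a,b,c)=c\cdot\#\cI(a,b)+\#\cI(a)=c(ab+1)+(a+1)=abc+a+c+1.
\]
Here I must flag a discrepancy: the stated formula in Corollary~\ref{ideals}(b) reads $abc+a+c$, but the recurrence together with the base cases forces a trailing $+1$, giving $abc+a+c+1$. I would double-check by directly counting ideals of $\uF(a,b,c)$; assuming the formulas as printed are what the authors intend, the cleanest route is to verify part~(b) against the recurrence and, if the $+1$ genuinely appears, treat the printed formulas as having an implicit normalization or correct the statement accordingly. For the purposes of the plan I will carry the formulas forward exactly as the recurrence produces them and note where they must be reconciled with the printed corollary.

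Continuing in the same vein, for part~(c) I would substitute the results for $(a,b,c)$ and $(a,b)$:
\[
\#\cI(a,b,c,d)=d\cdot\#\cI(a,b,c)+\#\cI(a,b),
\]
and expand, collecting terms to match $abcd+ab+ad+cd+1$. For part~(d) the final step is
\[
\#\cI(a,b,c,d,e)=e\cdot\#\cI(a,b,c,d)+\#\cI(a,b,c),
\]
again expanding and grouping to reach $abcde+abc+abe+ade+cde+a+c+e$. Each expansion is elementary polynomial algebra in the part-variables.

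The main obstacle is not conceptual but bookkeeping: one must track the lower-order monomials carefully through each substitution, since the number of terms grows with $s$ and it is easy to drop or miscombine a summand. A secondary subtlety is the parity caveat in Lemma~\ref{recurrenceI}'s proof (odd versus even $s$), but since the recurrence itself holds uniformly for all $s\ge3$, I can apply it blindly without re-examining which shared element is minimal or maximal at each stage. Given these considerations, the honest write-up is simply to state the base cases, invoke Lemma~\ref{recurrenceI} four times, and present the resulting expansions; the authors are justified in omitting the details as ``straightforward computations,'' provided the constant terms in the printed formulas are reconciled with what the recurrence yields.
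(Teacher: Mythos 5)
Your overall strategy—iterating Lemma~\ref{recurrenceI} from the two base cases—is exactly the paper's intended argument, but your execution contains a genuine error in the $s=1$ base case, and it is this error (not a flaw in the paper) that produces the ``discrepancy'' you flag. Under the paper's convention, $\al_i = 1+(\text{number of unshared elements on the $i$th segment})$, equivalently $\#\uF(\al)=\al_1+\cdots+\al_s-1$. Hence the one-segment fence $\uF(a)$ is a chain with $a-1$ elements, not $a$ elements, so $\#\cI(a)=a$, not $a+1$. With the correct base cases $\#\cI(a)=a$ and $\#\cI(a,b)=ab+1$, the recurrence reproduces the printed formulas exactly:
\begin{align*}
\#\cI(a,b,c)&=c(ab+1)+a=abc+a+c,\\
\#\cI(a,b,c,d)&=d(abc+a+c)+(ab+1)=abcd+ab+ad+cd+1,\\
\#\cI(a,b,c,d,e)&=e(abcd+ab+ad+cd+1)+(abc+a+c)=abcde+abc+abe+ade+cde+a+c+e.
\end{align*}
There is no missing $+1$ in part~(b), and your proposal to ``correct the statement'' would make it false: for $\al=(2,2,2)$ the fence is the $5$-element poset of Figure~\ref{graphs}, which has exactly $12=2\cdot2\cdot2+2+2$ ideals by direct enumeration, not $13$. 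Note also that your erroneous $+1$ would not stay confined to part~(b): carried through the recurrence it contributes an extra $+d$ in part~(c) and further spurious terms in part~(d), so your claim that the later expansions ``match'' the printed formulas is inconsistent with your own base case.

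Two smaller points. First, part~(a) cannot come from the recurrence (which starts at $s=3$), so a complete write-up should verify $\#\cI(a,b)=ab+1$ directly; this is immediate, since an ideal of $\uF(a,b)$ either contains the unique maximal shared element (one ideal) or is a pair of ideals in the two chains of unshared elements ($ab$ ideals). Second, your remark that the recurrence holds uniformly for all $s\ge3$ despite the parity split in its proof is correct, so that part of your plan is fine.
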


\begin{figure}
\centering
\begin{tikzpicture}[scale=0.5]
\draw[very thin] (0,0) grid (21,-2);
\secondrow \ro \re{3} \ro \re{3} \ro \re{3} \ro \re{3} \ro \re{3} \blank
\firstrow \ro \re{4} \ro \re{4} \ro \re{4} \ro \re{4} \twobyone;
\end{tikzpicture}
\caption{The orbit $\cO$ of length $21$ in $\uF(5,4)$, with $\chi(\cO)=32$ antichain elements.}
\label{uF(5,4)}
\end{figure}
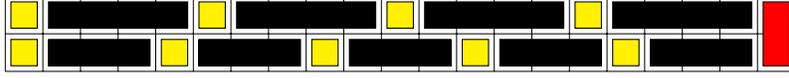

We start with the case of two segments.  Note that parts (c) and (d) of the next result are homometries which are not homomesies.  
\begin{thm}
\label{uF(a,b)}
Rowmotion on $\uF(a,b)$ has the following properties. 
\begin{enumerate}
\item[(a)] All orbits $\cO$ have size 
$\ell=\lcm(a,b)$
except for one, $\cO'$, which is the orbit of the empty set and has size $\ell+1$.
\item[(b)] The number of orbits is $\gcd(a,b)$.
\item[(c)] For any orbit of size $\ell$, 
$$
\chi(\cO)=\frac{2ab-a-b}{\gcd(a,b)}:=m.
$$
For the orbit of size $\ell+1$, 
$$
\chi(\cO') = m+1.
$$
\item[(d)]  For any orbit of size $\ell$, 
$$
\chih(\cO)=\frac{\ell(a+b-2)}{2}.
$$
For the orbit of size $\ell+1$, 
$$
\chih(\cO')=\frac{(\ell+2)(a+b-2)}{2}+1.
$$
\end{enumerate}
\end{thm}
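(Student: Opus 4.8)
The plan is to read every claim off the tiling model (Lemma~\ref{tiling}) and the statistics of Lemma~\ref{stats}, sorting orbits by whether their tiling contains a red tile. Since $s=2$, no red tile can have its head in row $2$, so $r_2=0$, and $r_0=0$ by the standing convention. Applying Lemma~\ref{stats}(a) to rows $1$ and $2$ gives $\#\cO=b_1a+r_1=b_2b+r_1$, hence $b_1a=b_2b$; being a common multiple of $a$ and $b$, this value is a multiple of $\ell=\lcm(a,b)$.

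First I would isolate the exceptional orbit. The peak $s_1=x_a$ is comparable to every other element, so $\{x_a\}$ is the unique antichain containing it; thus $x_a$ occurs in exactly one orbit, and there exactly once. By Lemma~\ref{stats}(b), $r_1=\chi_{s_1}(\cO)$, so exactly one orbit $\cO'$ has $r_1=1$ while every other orbit has $r_1=0$; call the latter \emph{red-free}.

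Next I would determine the red-free orbits. For such an orbit $\#\cO=b_1a=b_2b>0$, so $b_1,b_2\ge1$ and each row contains black tiles; Definition~\ref{TilDef}(b) then forces that no column is yellow in both rows. Hence, on the cylinder of width $W=\#\cO$, the yellow squares of row $1$ (resp.\ row $2$) occupy a coset of $a\bbZ_{W}$ (resp.\ $b\bbZ_{W}$), and these two cosets are disjoint. A horizontal shift preserves the tiling exactly when it lies in $a\bbZ_{W}\cap b\bbZ_{W}=\ell\bbZ_{W}$, so the fundamental period is $\ell$; since the antichains of an orbit are distinct, every red-free orbit has size exactly $\ell$. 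Counting them is then a group computation: via $\phi$, red-free orbits correspond to ordered pairs of disjoint cosets $(p+a\bbZ_{\ell},\,q+b\bbZ_{\ell})$ modulo the simultaneous translation action of $\bbZ_{\ell}$, which is free. Two such cosets meet iff $p\equiv q\pmod g$, so there are $ab-\ell$ disjoint pairs and thus $(ab-\ell)/\ell=g-1$ red-free orbits. Together with $\cO'$ this gives $g$ orbits, proving (b); summing all orbit sizes against $\#\cI(a,b)=ab+1=g\ell+1$ from Corollary~\ref{ideals}(a) forces $\#\cO'=\ell+1$, proving (a). I expect this counting step---pinning the number of red-free orbits at exactly $g-1$, which is what forbids $\#\cO'=k\ell+1$ for $k\ge2$---to be the main obstacle, since it is precisely what rules out larger orbits.

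Finally, (c) and (d) are direct evaluations. For every orbit, $b_1a=b_2b=\ell$ gives $b_1=b/g$ and $b_2=a/g$, so Lemma~\ref{stats}(d) yields $\chi(\cO)=b_1(a-1)+b_2(b-1)+r_1=(2ab-a-b)/g+r_1$, which equals $m$ when $r_1=0$ and $m+1$ for $\cO'$. For (d) I would sum Lemma~\ref{stats}(c) over all elements: the $a-1$ and $b-1$ unshared elements of the two segments contribute $b_1\binom{a}{2}+b_2\binom{b}{2}$ plus a summand $r_1$ each, and the shared maximal element $s_1$ contributes a further $r_1$, giving $\chih(\cO)=\tfrac{\ell(a+b-2)}{2}+r_1(a+b-1)$. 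Setting $r_1=0$ gives the size-$\ell$ value, and setting $r_1=1$ together with $\tfrac{\ell(a+b-2)}{2}+(a+b-1)=\tfrac{(\ell+2)(a+b-2)}{2}+1$ gives $\chih(\cO')$.
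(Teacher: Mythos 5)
Your proposal is correct, and it runs on the same engine as the paper's proof — the tiling bijection of Lemma~\ref{tiling}, the statistics of Lemma~\ref{stats}, the observation that $\{s_1\}$ is the unique antichain containing the shared element (hence exactly one orbit has $r_1=1$), and the count $\#\cI(a,b)=ab+1$ from Corollary~\ref{ideals}(a) — but the logical flow through parts (a) and (b) is organized differently. The paper argues directly that the exceptional tiling $T'$ has exactly $\ell+1$ columns (``one more column for the red tile''), takes the orbit sizes as established, and then solves $(k-1)\ell+(\ell+1)=ab+1$ for the number of orbits $k=\gcd(a,b)$. You go the other way: you enumerate the red-free orbits outright as disjoint coset pairs $(p+a\bbZ_\ell,\,q+b\bbZ_\ell)$ modulo the free simultaneous translation action of $\bbZ_\ell$, getting exactly $(ab-\ell)/\ell=g-1$ of them, and only then back-solve the global count for $\#\cO'=\ell+1$. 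Your route costs an extra counting argument but buys rigor exactly where the paper is terse: the paper's claim that $T'$ has $\ell+1$ (rather than $k\ell+1$ for some $k\ge 2$) columns implicitly needs the same distinctness-of-antichains periodicity argument that you make explicit for the red-free orbits via the stabilizer computation $a\bbZ_W\cap b\bbZ_W=\ell\bbZ_W$; in your arrangement that issue for $\cO'$ never arises, since its size falls out of arithmetic. The statistics in (c) agree with the paper's derivation verbatim, and in (d) you sum Lemma~\ref{stats}(c) over the elements of $\uF(a,b)$ where the paper invokes the aggregate formula of Lemma~\ref{stats}(e); these are the same computation, since (e) is obtained from (c) by exactly that summation.
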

\begin{proof}
(a)  There is only one antichain containing the unique shared element $s_1$ of $\uF(a,b)$.  So there is a unique tiling $T'$ with a red tile and all other tilings $T$ consist only of yellow and black tiles. Figure~\ref{uF(5,4)} displays $T'$ for $\uF(5,4)$.  

Consider a tiling $T$ of the second type.  From Definition~\ref{TilDef} (a), the first row can be partitioned into pairs consisting of a black tile and a yellow tile which together cover $a$ squares.  Similarly the second  row has a partition into blocks of $b$ squares.  So $T$ must have $\ell=\lcm(a,b)$ columns.  In the same manner we see that $T'$ must have one more column for the red tile.  This implies the desired orbit lengths.

\medskip

(b)  Let $k$ be the number of orbits.  Using part (a) and Corollary~\ref{ideals},
$$
ab+1=\#\cI(a,b) = (k-1)\ell+(\ell+1)=k\ell+1.
$$
It follows that $k=\gcd(a,b)$.

\medskip

(c)  From the description of the tilings in (a), for all orbits we have
$$
b_1=\frac{\lcm(a,b)}{a}=\frac{b}{\gcd(a,b)}.
$$
Similarly $b_2=a/\gcd(a,b)$.  Finally $r_1=0$ or $1$ for the orbits $\cO$ and $\cO'$, respectively.  The result now follows by substitution into Lemma~\ref{stats} (d), using that $\al_1=a$ and $\al_2=b$. 

\medskip

(d)  These equations follow from (a) and Lemma~\ref{stats} (e) in much the same way as the derivation in part (c).  We leave the details to the reader.
\end{proof}

In the next result we consider certain fences with three segments.  We do not explicitly state the values of the $\chi$ statistic  for the homometries, but these can be computed from the numbers of tiles of each color, which are given in the proof.   An example follows the demonstration.

\begin{thm}
\label{uF(a,b,a)}
Consider $\uF(a,b,a)$  and define
$$
g=\gcd(a,b),\quad
\ab = a/g,\quad
\bba = b/g,\quad
\ell=\lcm(a,b).
$$
Since $\ab,\bba$ are relatively prime, there exists a smallest positive integer $m$ such that
$$
m\ab = q \bba +1
$$
for some positive integer $q$.  Then the orbits of rowmotion on $\uF(a,b,a)$ can be partitioned by length into three sets $\cS,\cM,\cL$, which we call {\em small}, {\em medium}, and {\em large}, having the following properties.
\begin{enumerate}
    \item[(a)] We have
    $$
    \#\cS = \ab(g-1)^2,\quad
    \#\cM = q,\quad
    \#\cL = \ab-q,
    $$
    and
    $$
    \#\cO =
    \begin{cases}
    \ell&\text{if $\cO\in\cS$},\\
    a(2b-2\bba+m)+g&\text{if $\cO\in\cM$},\\
    a(2b-\bba+m)+g&\text{if $\cO\in\cL$}.
    \end{cases}
    $$
    \item[(b)] For rowmotion on antichains, $\chi$ is homometric. 
    \item[(c)] For rowmotion on ideals, $\chih$ is $n/2$-mesic where $n=\#\uF(a,b,a)=2a+b-1$.
\end{enumerate}
\end{thm}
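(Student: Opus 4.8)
The plan is to work entirely through the tiling model of Lemma~\ref{tiling}, so that describing the orbits of $\uF(a,b,a)$ becomes the problem of enumerating all $(a,b,a)$-tilings, and computing $\chi$ and $\chih$ becomes substitution into Lemma~\ref{stats}. The three-row setting gives three incarnations of Lemma~\ref{stats}(a),
$$
\#\cO = b_1 a + r_1 = b_2 b + r_1 + r_2 = b_3 a + r_2
$$
(with $r_0=0$ and $r_3=0$), and these are the backbone of the whole argument. Note immediately that subtracting the first and last expressions gives $a(b_1-b_3)=r_2-r_1$, so $b_1=b_3$ is equivalent to $r_1=r_2$.

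The heart of part (a) is a classification of all $(a,b,a)$-tilings, and its delicate point is the structural symmetry that every such tiling satisfies $r_1=r_2$. The reason is the palindromic shape of $(a,b,a)$, whose order-reversing automorphism $\ka$ interchanges the two outer segments and hence the roles of rows $1$ and $3$: reversing the columns of a tiling and swapping rows $1$ and $3$ sends valid tilings to valid tilings, since condition (a) of Definition~\ref{TilDef} is symmetric and the reflection simultaneously interchanges the odd-row and even-row clauses of condition (b) together with ``previous'' and ``next'' column. Once $r_1=r_2=:r$ is in force, the first two displayed equalities give $r=b_1 a-b_2 b=g(b_1\ab-b_2\bba)$, so $r$ is a multiple of $g$; the tiling constraints then pin it down to $r\in\{0,g\}$. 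The case $r=0$ produces the small orbits: each row is a red-free periodic staggering of black and yellow tiles, forcing period $a$ in rows $1,3$ and period $b$ in row $2$, hence $\#\cO=\lcm(a,b)=\ell$, and counting the allowable relative phases of the three staggered rows yields $\#\cS=\ab(g-1)^2$. The case $r=g$ gives the medium and large orbits: the $g$ reds in each shared row fix the phase alignment, and the number of columns is controlled by when the phase of row $2$ first re-synchronizes with the reds — precisely the modular-inverse datum $m\ab=q\bba+1$. Writing $b_1=b_3=2b-2\bba+m$ (medium) and $b_1=b_3=2b-\bba+m$ (large), these differ by $\bba$, so the sizes differ by $a\bba=\ell$; substituting into $\#\cO=b_1 a+r$ gives the two stated sizes, while the phase count gives $\#\cM=q=(\ab m-1)/\bba$ and $\#\cL=\ab-q=(\ab(\bba-m)+1)/\bba$. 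I would cross-check all of this against Corollary~\ref{ideals}(b) by confirming that $\#\cS\cdot\ell$ plus the analogous contributions of the medium and large orbits sum to $a^2b+2a=\#\cI(a,b,a)$.

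Parts (b) and (c) are then short corollaries. For (b), Lemma~\ref{stats}(d) gives $\chi(\cO)=\sum_i\bigl(b_i(\al_i-1)+r_i\bigr)$, and since the classification shows that the tuple $(b_1,b_2,b_3,r_1,r_2)$ is constant across all orbits of a fixed size, $\chi(\cO)$ depends only on $\#\cO$ and is therefore orbomesic. For (c), the symmetry $r_1=r_2$ is exactly what is needed: substituting $b_1=b_3$ and $r_1=r_2$ into Lemma~\ref{stats}(e) (equivalently, summing Lemma~\ref{stats}(c) over all $x$) collapses the $r$-dependent terms and yields $\chih(\cO)=n\cdot\#\cO/2$ with $n=2a+b-1$, so $\chih$ is $(n/2)$-mesic. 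This is the fencewise shadow of the self-duality of $\uF(a,b,a)$ exploited in Corollary~\ref{DualOrb} and Theorem~\ref{n/2-mes}: $r_1=r_2$ is precisely the balance under the ideal complement of Lemma~\ref{rho+bar}.

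The main obstacle is the enumeration in part (a): proving rigorously, rather than merely arithmetically, that the only possibilities are $r\in\{0,g\}$ with $r_1=r_2$ on every individual orbit, and that the phase bookkeeping for the medium/large split is governed exactly by the modular inverse $m$. By contrast, the symmetry argument and the substitutions underlying (b) and (c) are routine once the classification is in hand.
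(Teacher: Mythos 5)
Your overall route --- the tiling model plus Lemma~\ref{stats} --- is the same as the paper's, but the step you yourself flag as ``the delicate point'' is a genuine gap, and the argument you offer for it does not work. The palindromic symmetry of $(a,b,a)$ shows only that the operation ``reverse the columns and swap rows $1$ and $3$'' maps the \emph{set} of valid tilings to itself; it sends a tiling with red counts $(r_1,r_2)$ to a (possibly different) tiling with red counts $(r_2,r_1)$. Closure of the set of tilings under this involution does not make any individual tiling a fixed point, so it does not prove $r_1=r_2$ on each orbit. Likewise, ``the tiling constraints then pin it down to $r\in\{0,g\}$'' is asserted rather than proved (nothing you say rules out $r=2g$, say), and the phase bookkeeping that is supposed to produce the sizes $a(2b-2\bba+m)+g$ and $a(2b-\bba+m)+g$ and the counts $q$ and $\ab-q$ is only sketched. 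Since your proof of (c) also rests entirely on $b_1=b_3$ and $r_1=r_2$ holding on every orbit, the gap propagates to all three parts.

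The paper avoids needing any a priori structural claim: it explicitly constructs the tilings (listing the exact columns of the red-tile heads for the small, medium, and large families), checks that they are pairwise distinct, and then proves \emph{completeness} by summing the constructed orbit sizes and comparing with $\#\cI(a,b,a)=a^2b+2a$ from Corollary~\ref{ideals}(b); the facts $r_1=r_2\in\{0,g\}$ and $b_1=b_3$ are then read off from the constructions rather than deduced abstractly. In other words, the ``cross-check'' you mention in passing is actually the load-bearing step: if you carry out the explicit constructions and the completeness count, your classification claims become theorems and the rest of your argument goes through. It is worth noting that your part (c) would then be a genuine improvement over the paper: substituting $\#\cO=b_1a+r=b_2b+2r$ into Lemma~\ref{stats}(e) does make the $r$-dependent terms cancel and yields $\chih(\cO)=\frac{n}{2}\cdot\#\cO$ uniformly for all three families, whereas the paper handles $\cS$ by direct computation and $\cM\cup\cL$ by locating, inside each orbit, an ideal $I$ with $\Ib$ in the same orbit so as to invoke Corollary~\ref{DualOrb}(a).
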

\begin{proof}
(a) We will construct the corresponding tilings, where we will number the columns from left to right starting with column $0$.  We can use any integer to refer to a column by using its standard representative modulo $\#\cO$, the number of columns.
Figure~\ref{uF(4,3,4)} gives an example of the tilings for $\uF(4,3,4)$, where $\cS=\emp$ because $g=1$.

We first construct the tilings $T$ for the orbits in $\cS$. 
We also standardize these tilings so that the first row begins with a yellow tile.
The first row of $T$ will consist of $\ell/a$ copies of a pair consisting of a yellow tile followed by a black tile of length $a-1$.  So $T$ has length $\ell$.
We now tile the second row with $\ell/b$ copies of a paired yellow tile and black tile of length $b-1$ positioned in such a way that no two yellow tiles from the first two rows are in the same column.  Finally tile the third row with alternating yellow and length $a-1$ black tiles, requiring that no two yellow tiles from the last two rows are in the same column.  Clearly this is a valid tiling of length $\ell$.  So, for this case, it remains to calculate the number of rotationally different tilings obtained.

The first square of a black tile in the second row could be placed under any one of the $a$  squares of the union of a yellow tile and an adjacent black tile in the first row.  Since there are $\ell/b$ black tiles in the second row, any placement of the first black tile will be rotationally equivalent to that many other placements.  So the number of nonequivalent placements is $a/(\ell/b)=g$.  But one of these will be the rotation class which has two yellow tiles in the same column. Thus the count for the placement of the second row is really $g-1$.  Similarly one sees that the number of inequivalent placements for the third row is $\ab(g-1)$, giving $\ab(g-1)^2$ as the final total.

We will now construct $\ab$ tilings  $T_1,\ldots,T_{\ab}$,
where the $T_k$ for $k\le (\ab m-1)/b$ will have $a(2b-2\bba+m)+g$ columns and correspond to orbits in $\cM$, and those for the remaining values of $k$ will have $a(2b-\bba+m)+g$ columns and correspond to orbits in $\cL$.  All these tilings will have $g$ red tiles with their heads in row $1$ and $g$ red tiles with their heads in row $2$.  For all $k$, the former tiles of $T_k$ will be in columns
$$
0,\ \ell+1,\ 2(\ell+1),\ \ldots,\ (g-1)(\ell+1),
$$
and the latter will be in columns
$$
-kb,\ -kb-(\ell+1),\ -kb-2(\ell+1),\ \ldots,\
-kb-(g-1)(\ell+1),
$$
where the indices are taken modulo the number of columns  of the tiling.  It is now a matter of simple arithmetic to check that these placements of red tiles can be (uniquely) extended to full tilings.  Also, these tilings are distinct, since the red tiles with heads in the first row of all $T_k$ are fixed in this description, while the red tiles with heads in the second row vary.

Finally we must verify that these tilings correspond to all orbits by adding up the orbit sizes and making sure they sum to $a(ab+2)$, which is the total number of antichains in $\uF(a,b,a)$ by Corollary~\ref{ideals} (b).  This is a simple, if tedious, computation and is left to the reader.

\medskip

(b)  We see from their descriptions in part (a) that these tilings have the following parameters.
$$
\begin{array}{c|ccc}
\text{type of orbit}
        & b_1=b_3   & b_2       & r_1=r_2\\
\hline
\cS     &\bba        &\ab    &0\rule{0pt}{12pt}\\
\cM     &2b-2\bba+m  &2a-2\ab+q  &g\\
\cL     &2b-\bba+m   &2a-\ab+q   &g
\end{array}
$$
Since $\chi$ only depends on these parameters by Lemma~\ref{stats} (d), and they are constant on orbit sizes, this statistic is homometric.

\medskip

(c)  For the orbits  $\cO\in\cS$, using Lemma~\ref{stats} (e) and the previous table, we obtain
$$
\chih(\cO)
=\ell+2 \bba\binom{a}{2}+\ab\binom{b}{2}
=\ell+\ell(a-1)+\frac{\ell(b-1)}{2}
=\ell\cdot \frac{2a+b-1}{2}
=\ell\cdot \frac{n}{2}
$$
as desired.

For the orbits in $\cO\in\cM\cup\cL$ we will use Corollary~\ref{DualOrb} (a).  It suffices to find an ideal $I\in\cO$ with $\bar{I}\in\cO$.  Let $I$ be the ideal corresponding to the zeroth column of the corresponding tiling $T$.  We claim that the ideal $J$ corresponding to column $-kb-1$ satisfies $J=\bar{I}$.  By the choice of $I$, it must contain $s_1$, the intersection of the first two segments.
Suppose $I$'s maximum element on the third segment is the $t$th unshared element from the bottom, where $t=0$ if $I$ contains no such element.

The column for $J$ comes just before the red tile in column $-kb$.  So $J$ can have no elements in either the second or third segments.  Furthermore, in the first row there is an alternating sequence of black and yellow tiles, starting with a black tile whose last square is in the last column and working backwards, because of the red tile in column $0$.  Similarly, the red tile in column $-kb$ guarantees a similar sequence of the same length starting in column $-kb+1$ and going forwards in the third row.  It follows that $J$ has the $(a-t-1)$st unshared element on segment one as its maximal element.  But this matches the description of $\bar{I}$ given by applying $c$ and then applying $\kappa$.
\end{proof}

\begin{figure}
\centering
\begin{tikzpicture}[scale=0.5]
\draw[very thin] (0,0) grid (6,-3);
\firstrow \re{1} \ro \re{1} \ro \re{1} \ro
\secondrow \ro \re{5}
\thirdrow \re{1} \ro \re{1} \ro \re{1} \ro

\begin{scope}[shift={(8,0)}]
\draw[very thin] (0,0) grid (22,-3);
\firstrow \ro\re{1}\ro\re{1}\ro\re{1}\ro\re{1}\ro\re{1}\ro\re{1}\ro\re{1}\twobyone\ro\re{1}\ro\re{1}\ro\re{1}\twobyone
\secondrow \ro\twobyone\re{5}\ro\twobyone\re{5}\blank\ro\re{5}\blank
\thirdrow \ro\blank\re{1}\ro\re{1}\ro\re{1}\ro\blank\re{1}\ro\re{1}\ro\re{1}\ro\re{1}\ro\re{1}\ro\re{1}\ro\re{1}
\end{scope}

\end{tikzpicture}
\capt{The two orbits of $\uF(2,6,2)$.}
\label{uF(2,6,2)}
\end{figure}

To illustrate the previous result, consider the fence $\uF(2,6,2)$.
So 
$$
g=\gcd(2,6)=2,\quad
\ab=2/2 = 1,\quad
\ol{b}= 6/2 = 3,\quad
\ell=\lcm(2,6)=6.
$$
So $m\ab=q\ol{b}+1$ becomes $m=3q+1$, and the smallest $m$ satisfying this relation for some positive $q$ is when $q=1$ and $m=4$.  It follows that
$$
\#\cS=1\cdot(2-1)^2 =1,\quad
\#\cM = 1,\quad
\#\cL = 1-1 = 0.
$$
The single small and medium orbits are shown in Figure~\ref{uF(2,6,2)}.

We next consider the case of a $4$-segment fence with all parts of $\al$ equal.  An example of the orbits is given  in Figure~\ref{uF(3,3,3,3)}.

\begin{thm}
\label{uF(a,a,a,a)}
The orbits of rowmotion on $\uF(a,a,a,a)$ can be partitioned into four sets $\cS,\cM,\cL,\cG$ called
{\em small}, {\em medium}, {\em large}, and {\em gigantic} having the following properties.
\begin{enumerate}
    \item[(a)] We have
    $$
    \#\cS = (a-1)^3,\quad
    \#\cM = a,\quad
    \#\cL = 1,\quad
    \#\cG = a-1,
    $$
    and
    $$
    \#\cO =
    \begin{cases}
    a&\text{if $\cO\in\cS$},\\
    a+1&\text{if $\cO\in\cM$},\\
    a^2+a+1&\text{if $\cO\in\cL$},\\
    3a^2+a&\text{if $\cO\in\cG$}.
    \end{cases}
    $$
    \item[(b)]  The statistics $\chi$ and $\chih$ are homometric under rowmotion, with values given in the following chart:
    $$
    \barr{c|cccc}
            &\cS    &\cM    &\cL    &\cG\\
    \hline
    \chi(\cO)& 4a-4 & 4a-2  & 4a^2-a& 12a^2-11a+2\\
    \chih(\cO)&2a^2-a&2a^2+3a-1&2a^3+3a-1&6a^3-a
    \earr
    $$
\end{enumerate}
\end{thm}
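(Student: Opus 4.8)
The plan is to follow the strategy of the proof of Theorem~\ref{uF(a,b,a)}: use the bijection $\phi$ of Lemma~\ref{tiling} to replace orbits of $\cA(a,a,a,a)$ by $\al$-tilings, construct all $(a,a,a,a)$-tilings sorted by width (i.e. by orbit size), count the rotationally distinct ones in each width class, and then read off the tile parameters $b_i,r_i$ and feed them into Lemma~\ref{stats}(d),(e). Since $\al=(a,a,a,a)$, every row uses black tiles of length $a-1$, and the three kinds of red tiles have heads in rows $1,2,3$, counted by $r_1,r_2,r_3$ (with $r_0=r_4=0$). Lemma~\ref{stats}(a) supplies the four identities $\#\cO=ab_i+r_i+r_{i-1}$ for $i\in[4]$, which will pin down the parameters once the number and placement of the red tiles is known.

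First I would dispose of the red-free tilings. By Definition~\ref{TilDef}(a) such a tiling has period $a$ in every row, hence width $a$ with one yellow--black pair per row. Standardizing the yellow square of row $1$ to column $0$, condition (b) of Definition~\ref{TilDef} forces, for the tiling to stay red-free, that the yellow square of row $i+1$ avoid the column of the yellow square of row $i$ for $i=1,2,3$; each choice is free among the remaining $a-1$ columns, so $\#\cS=(a-1)^3$, all with $b_i=1$, $r_i=0$. For the medium class, Lemma~\ref{stats}(a) applied at $i=1$ and $i=4$ shows a tiling of width $a+1$ must carry exactly one red tile with head in row $1$ and one with head in row $3$ (so $r_1=r_3=1$, $r_2=0$, $b_i=1$); counting the rotationally inequivalent placements of the second peak tile relative to the first yields $\#\cM=a$.

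The heart of the argument is the large and gigantic classes, which carry $r_2>0$ red tiles with head in the central valley row. Here I would proceed exactly as in Theorem~\ref{uF(a,b,a)}: fix the columns of the peak red tiles (heads in rows $1$ and $3$), show that the valley red tiles can then be inserted in essentially one way, and verify that the admissible configurations split into one family of width $a^2+a+1$ (with $b_1=b_4=a+1$, $b_2=b_3=a$, $r_1=r_3=1$, $r_2=a$), giving the single orbit in $\cL$, and a family of width $3a^2+a$ (with $r_1+r_3=2a$, $r_2=a$) indexed by the relative shift of the peak red tiles, giving $\#\cG=a-1$ orbits. Exhaustiveness is then confirmed not tiling-by-tiling but by checking that the total number of antichains
$$
a(a-1)^3+a(a+1)+(a^2+a+1)+(a-1)(3a^2+a)=a^4+3a^2+1
$$
agrees with $\#\cI(a,a,a,a)$ from Corollary~\ref{ideals}(c); since the four families are manifestly disjoint and each consists of genuine $\al$-tilings, they must account for all orbits.

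Finally, orbomesy and the chart in (b) follow by substitution. By Lemma~\ref{stats}(d), $\chi(\cO)=(a-1)\sum_i b_i+\sum_i r_i$ depends only on $B=\sum_i b_i$ and $R=\sum_i r_i$, and by Lemma~\ref{stats}(e), $\chih(\cO)$ depends only on $\#\cO$, $B$, $r_2$, and $r_1+r_3$; all of these are constant on each width class, even though the individual $b_i,r_i$ may vary within $\cG$. Hence both statistics are orbomesic and evaluate to the stated values, and the fact that they see the tiling only through these class-invariant aggregates is exactly why one obtains orbomesy rather than homomesy. The main obstacle is the construction and counting of $\cL$ and $\cG$: proving that the row-$2$ red tiles are forced once the peak tiles are placed, and correctly enumerating the rotation classes, is delicate, and it is here that the $\gcd$/shift bookkeeping of the $\uF(a,b,a)$ argument must be adapted to accommodate the extra middle row.
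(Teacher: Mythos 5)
Your proposal follows essentially the same route as the paper's own proof: both pass from orbits to $\al$-tilings via Lemma~\ref{tiling}, construct the same four families (the paper indexes them by the relative positions of the first yellow square in each row, you by width and red-tile structure, a purely cosmetic difference), establish exhaustiveness by exactly the same device of comparing the total column count $a^4+3a^2+1$ with Corollary~\ref{ideals}(c), and deduce part (b) by substituting the class-invariant parameters $b_i,r_i$ into Lemma~\ref{stats}(d),(e). The step you defer---writing down explicit column positions for the red tiles of $\cL$ and $\cG$ and checking they extend to valid tilings---is precisely what the paper's proof supplies, and the parameter values you predict for those classes agree with the paper's table, so your plan is a faithful blueprint of the published argument.
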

\begin{proof}
(a) We will construct the corresponding tilings, labeling  the columns starting at 0. For $1\le i\le 4$, let $j_i$ denote the smallest nonnegative number so that row $i$ of the tiling has a yellow square in column $j_i$. 
We set $j_1=0$, since one can always choose the first antichain in the orbit to have a yellow square in the top row. Let us consider four types of orbits as follows.

\medskip

$\cS$: $0\neq j_2\neq j_3\neq j_4$. In such an orbit, each row simply consists of one yellow and one black tile, and the orbit has length $a$. For $2\le i\le 4$, there are $a-1$ possibilities for $j_i$, since it can be any integer between $1$ and $a$ other than $j_{i-1}$. Thus, there are $(a-1)^3$ such orbits. 
\medskip

$\cM$: $0= j_2\neq j_3= j_4$. In such an orbit, rows 1 and 2 both consist of a yellow tile in column 0, followed by a black tile, followed by a red tile with its head in row 1. Rows 3 and 4 have the same configuration, shifted $j_3$ positions to the right. Therefore, such an orbit has length $a+1$. Since $j_3$ can be any integer between $1$ and $a$, there are $a$ such orbits. 
\medskip

$\cL$: $0= j_2= j_3= j_4$. We will construct a unique orbit of this type where every row intersects at least one red tile.  So it suffices to specify the positions of these tiles. There are two red tiles in column $a(a+1)$ (the rightmost column), with heads in rows 1 and 3. Additionally, there are $a$ red tiles with their head in row 2, in columns $1+r(a+1)$ for $0\le r\le a-1$. It is straightforward to check that these red tiles can be extended to a full tiling.
 
\medskip

$\cG$: $0= j_2= j_3\neq j_4$. Let us write $j$ instead of $j_4$ for short. 
For each value of $j$ satisfying $1\le j\le a-1$, we construct an orbit of length $3a^2+a$.
Again, it suffices to specify the positions of the red tiles, since every row intersects at least one red tile. From left to right, the red tiles are in the following positions:
\begin{itemize}
    \item In columns $1+r(a+1)$ for $0\le r\le j-1$, with head in row 2.
    \item In columns $r(a+1)$ for $j\le r\le a+j-1$, with head in row 3.
    \item In columns $2+r(a+1)$ for $a+j-1\le r\le 2a-2$, with head in row 2.
    \item In columns $1+r(a+1)$ for $2a-1\le r\le 3a-2$, with head in row 1.
\end{itemize}

It can be checked that this configuration of red tiles can be extended to a valid tiling. Note  that the $a-1$ orbits obtained for the different values of $j$ are all distinct. This is because the positions of the red tiles with heads in row $1$ are independent of $j$, whereas the sets of positions of the other red tiles are different for each $j$.

Next we show that we have obtained tilings for all the orbits. The total number of antichains in the orbits that we have constructed is
$$
(a-1)^3\cdot a+ a\cdot (a+1)+(a^2+a+1)+(a-1)\cdot(3a^2+a)=a^4+3a^2+1,
$$
which equals $\#\cI(a,a,a,a)$ by Corollary~\ref{ideals} (c).

\medskip

(b) From the constructions in (a) we see that, for given $i$, the parameters $b_i$ and $r_i$ are constant in each of the four classes of orbits.  Homometry follows immediately.  To compute the statistics, we note the following specific values.
$$
\begin{array}{c|cccc}
\text{type of orbit}
        & b_1=b_4   & b_2=b_3   & r_1=r_3   & r_2\\
\hline
\cS     & 1         & 1         & 0         & 0\rule{0pt}{12pt}\\
\cM     & 1         & 1         & 1         & 0\\
\cL     & a+1       & a         & 1         & a\\
\cG     & 3a        & 3a-1      & a         &a
\end{array}
$$
Now the formulas in Lemma~\ref{stats} (d) and (e) complete the proof.
\end{proof}

\begin{figure}
\centering
\begin{tikzpicture}[scale=0.5]
\draw[very thin] (0,0) grid (3,-4);
\firstrow \ro \re{2}
\secondrow \releft{1} \ro \reright{1}
\thirdrow \ro \re{2}
\fourthrow \re{2} \ro

\begin{scope}[shift={(6,0)}]
\draw[very thin] (0,0) grid (4,-4);
\firstrow \ro \re{2} \twobyone
\secondrow \ro \re{2} \blank
\thirdrow \releft{1} \ro \twobyone \reright{1}
\fourthrow \releft{1} \ro \blank \reright{1}
\end{scope}

\begin{scope}[shift={(13,0)}]
\draw[very thin] (0,0) grid (13,-4);
\firstrow \ro \re{2} \ro \re{2}\ro \re{2}\ro \re{2} \twobyone
\secondrow \ro \twobyone \re{2} \ro \twobyone \re{2} \ro \twobyone \re{2} \blank
\thirdrow \ro \blank \re{2} \ro \blank \re{2} \ro \blank \re{2} \twobyone
\fourthrow \ro \re{2} \ro \re{2}\ro \re{2}\ro \re{2} \blank
\end{scope}

\begin{scope}[shift={(-2,-5)}]
\draw[very thin] (0,0) grid (30,-4);
\firstrow \ro\re{2} \ro\re{2} \ro\re{2} \ro\re{2} \ro\re{2} \ro\re{2} \ro\re{2}\twobyone \ro\re{2}\twobyone \ro\re{2}\twobyone
\secondrow \ro\twobyone\re{2} \ro\twobyone\re{2} \ro\re{2} \ro\re{2} \ro\re{2} \ro\twobyone\re{2}\blank \ro\re{2}\blank \ro\re{2}\blank
\thirdrow \ro\blank\re{2} \ro\blank\re{2} \twobyone\ro\re{2} \twobyone\ro\re{2} \twobyone\ro\blank\re{2} \ro\re{2} \ro\re{2} \ro\re{2}
\fourthrow \re{2} \ro\re{2} \ro\re{2}\blank \ro\re{2}\blank \ro\re{2}\blank \ro\re{2} \ro\re{2} \ro\re{2} \ro\re{2} \ro
\end{scope}
\end{tikzpicture}
\capt{An orbit of $\uF(3,3,3,3)$ in each of the four sizes.}
\label{uF(3,3,3,3)}
\end{figure}

We end this section by considering certain fences with $5$ segments.  An example for $\uF(3,1,3,1,3)$ is given in Figure~\ref{uF(3,1,3,1,3)}.

\begin{thm}
\label{uF(a,1,a,1,a)}
The orbits of rowmotion on $\uF(a,1,a,1,a)$  can be partitioned into 
three sets $\cS,\cM,\cL$  called {\em small}, {\em medium}, and {\em large} having the following properties.
\begin{enumerate}
    \item[(a)] We have
    $$
    \#\cS = 2a-2,\quad
    \#\cM = 1,\quad
    \#\cL = a,
    $$
    and
    $$
    \#\cO =
    \begin{cases}
    a+1&\text{if $\cO\in\cS$},\\
    3a+2&\text{if $\cO\in\cM$},\\
    a^2+2a&\text{if $\cO\in\cL$}.
    \end{cases}
    $$
    \item[(b)] For rowmotion on antichains, $\chi$ is homometric  with
    $$
    \chi(\cO) =
    \begin{cases}
    3a&\text{if $\cO\in\cS$},\\
    9a-3&\text{if $\cO\in\cM$},\\
    3a^2+3a-2&\text{if $\cO\in\cL$}.
    \end{cases}
    $$   
    \item[(c)] The dihedral group orbits are pairs of orbits in $\cS$ and single orbits in $\cM$ and $\cL$.
\end{enumerate}
\end{thm}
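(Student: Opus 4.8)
The plan is to reduce part (c) to a central-symmetry property of the tilings $\phi(\cO)$ built in the proof of part (a), and then to read that property off family by family. First I would record that $\uF(a,1,a,1,a)$ is self-dual, since $\al=(a,1,a,1,a)$ is a palindrome with an odd number of parts, and fix the order-reversing bijection $\ka(x_i)=x_{n+1-i}$ with $n=\#\uF(a,1,a,1,a)=3a+1$. By the paragraph preceding Theorem~\ref{n/2-mes}, every superorbit is either a single $\rhoh$-orbit (exactly when that orbit is closed under $I\mapsto\Ib$) or a union of two orbits, and by Lemma~\ref{rho+bar} the map $I\mapsto\Ib$ carries each orbit to an orbit of the same size. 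So it suffices to decide which orbits are self-complementary.

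Next I would translate the ideal complement into the antichain and tiling pictures. If $A$ is an antichain with ideal $I=\De^{-1}(A)$, then $\Ib=\ka\circ c(I)$, and since $\ka$ is order-reversing it sends the minimal elements of $c(I)$ to the maximal elements of $\Ib$; hence $\De(\Ib)=\ka(\na(c(I)))=\ka(\rho(A))$. As $\rho$ permutes $\cO$, the orbit $\cOb$ of ideal complements has antichain set $\{\ka(\rho(A)):A\in\cO\}=\ka(\cO)$. Combining this with Lemma~\ref{rho+bar} (which shows $\rhoh$ acts on $\cOb$ exactly as $\rhoh^{-1}$ does on $\cO$, so that the traversal of columns is reversed), I would conclude that the tiling $\phi(\cOb)$ is the image of $T=\phi(\cO)$ under the $180^\circ$ rotation $R$: reverse the columns and flip the rows by $i\leftrightarrow 6-i$. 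Under $R$ the yellow and black squares are preserved (using $\al_i=\al_{6-i}$), while red-tile heads are exchanged between rows $1$ and $4$ and between rows $2$ and $3$. Thus $\cO$ is a single superorbit precisely when $T$ is invariant under $R$ up to the horizontal translation by which tilings are identified, and otherwise $R$ pairs $T$ with a distinct equal-length tiling, yielding a two-orbit superorbit.

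With this criterion in hand I would treat the three families from part (a). For $\cL$, I would check directly from the red-tile positions listed in (a) that each of the $a$ tilings is centrally symmetric: the column-reversal together with the row-flip carries the prescribed list of red tiles (with heads distributed appropriately among rows $1,2,3,4$) to itself up to translation, so every $\cL$-orbit is $R$-invariant and forms a single superorbit. For $\cM$ I would argue the same way for its single tiling; alternatively, for $a\ge 3$ its length $3a+2$ is shared by no other orbit, so $R(T)$ must equal $T$. For $\cS$ I would show instead that $R$ is fixed-point free: the $2a-2$ small tilings are indexed by the cyclic offset data recorded in (a), and $R$ acts on that data as a fixed-point-free involution, pairing the $2a-2$ orbits into $a-1$ two-orbit superorbits.

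The main obstacle is the bookkeeping in the last paragraph. One must pin down exactly how $R$ --- column reversal composed with the row-flip $i\leftrightarrow 6-i$ --- permutes the parameters indexing each family (the columns and head-rows of the red tiles, and the yellow offsets in rows $1,3,5$), and then verify invariance for $\cL$ and $\cM$ but freeness for $\cS$. The $\cS$ case is the delicate one: a priori a small tiling could happen to be centrally symmetric, so one must use the constraints of Definition~\ref{TilDef}(b) to show that the offset data is always genuinely moved by $R$.
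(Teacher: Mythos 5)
Your proposal addresses only part (c) of the theorem; this is the central gap. Parts (a) and (b) --- the explicit construction of the tilings, the count of $2a-2$, $1$, and $a$ orbits with their sizes $a+1$, $3a+2$, $a^2+2a$, and the computation of $\chi$ on each family --- are nowhere argued. Worse, your part (c) argument is not self-contained even granting (a) and (b) as stated: you repeatedly invoke data that does not appear in the statement of (a), namely ``the red-tile positions listed in (a)'' and ``the cyclic offset data recorded in (a).'' Those exist only in a \emph{proof} of (a) (in the paper, the table of column numbers for the red-tile heads of the orbits $\cO_k$, $\cO'$, $\cO_k'$, $\cO_k''$), which you would first have to supply. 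Since your final verification for all three families is explicitly deferred to ``bookkeeping'' on that missing data --- and you yourself flag the $\cS$ case, where fixed-point-freeness must be checked, as the delicate one --- the proof cannot be completed as written.

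That said, the mechanism you propose for (c) is sound and is essentially the paper's: the paper also fixes $\ka(x_i)=x_{n+1-i}$, uses Lemma~\ref{rho+bar} (so that $I\mapsto\Ib$ carries orbits to orbits of the same size, reversing the traversal), and then decides which orbits are self-paired. Your reformulation of self-pairing as invariance of $\phi(\cO)$ under the $180^\circ$ rotation (column reversal composed with the row flip $i\leftrightarrow 6-i$) is correct and is a nice uniform criterion; the paper instead exhibits, for each $\cO_k\in\cL$, an explicit ideal $I$ (column $0$) with $\Ib$ in the same orbit (column $(a-k-1)(a+2)+1$), and pairs $\cO_k'$ with $\cO_k''$ by matching explicit columns. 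One more caution: your fallback argument for $\cM$ (``its length $3a+2$ is shared by no other orbit'') fails at $a=2$, where $3a+2=a^2+2a=8$; there you must either check rotation invariance directly or first settle the $\cL$ orbits and argue by elimination.
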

\begin{proof}
(a)
We label the columns of a tiling as usual.  Since there will be no black tiles in rows 2 and 4, every row will intersect at least one red tile.  Thus it suffices to specify the positions of these tiles.  We will omit the straightforward check that the given positions of the red tiles can be extended to a full tiling with no column repeated, and that the orbits are distinct and have a total of  $a^3+4a^2+3a$ columns, which is the total number of antichains of the fence by Corollary~\ref{ideals} (d).  We label the orbits in $\cL$ by $\cL_k$ where $1\le k\le a$.  We label the orbits in $\cS$ which have no red tile with a head in row $2$ as $\cS_k$ where $1\le k\le a-1$.
Similarly, the orbits in $\cS$ with no red tile with a head in row $3$ are labeled as $\cS_k'$  where $1\le k\le a-1$.
Every tiling in $\cS$ satisfies exactly one of these two conditions, so these two subsets have disjoint union $\cS$. Finally, the sole orbit in $\cM$ will be labeled $\cM_1$.

We now give the column numbers for the red tiles in each orbit with their head in each row.  The orbits $\cO_k$ have $r_i=a$ for $1\le i\le 4$, and so the positions will be parameterized by $q$ where $0\le q \le a-1$.  The symbol NA (not applicable) in the following table indicates that there are no red tiles for that orbit with their head in the corresponding row.
$$
\begin{array}{l||c|c|c|c}
\text{orbit}  &
\cL_k & \cM_1 & \cS_k & \cS_k'\\
\hline
\text{row $1$}&
q(a+1)          & 0, a+1    & 0     & 0\\
\text{row $2$}& 
q(a+2)+2        & -a        & \text{NA}& k+1\\
\text{row $3$}& 
q(a+2)          & a         & -1    & \text{NA}\\
\text{row $4$}& 
(q-k+1)(a+1)-k  & 0, -a-1   & k     & k\\
\end{array}
$$

\begin{figure}
\centering
\begin{tikzpicture}[scale=0.5]
\draw[very thin] (0,0) grid (4,-5);
\firstrow \twobyone \ro \re{2}
\secondrow \blank \ro \ro \ro
\thirdrow \ro \re{2} \twobyone
\fourthrow \ro \twobyone \ro \blank
\fifthrow \ro \blank \re{2}

\begin{scope}[shift={(6,0)}]
\draw[very thin] (0,0) grid (11,-5);
\firstrow \twobyone \ro \re{2} \twobyone \ro \re{2} \ro \re{2}
\secondrow \blank \ro\ro\ro \blank \ro\ro\ro \twobyone \ro\ro
\thirdrow \ro\re{2} \twobyone \ro\re{2}\ro  \blank \re{2}
\fourthrow \twobyone \ro\ro \blank \ro\ro\ro \twobyone \ro\ro\ro
\fifthrow \blank \re{2}\ro\re{2}\ro \blank \re{2}\ro
\end{scope}

\begin{scope}[shift={(1,-6)}]
\draw[very thin] (0,0) grid (15,-5);
\firstrow \twobyone \ro\re{2} \twobyone \ro\re{2} \twobyone \ro\re{2}\ro\re{2}
\secondrow \blank \ro \twobyone \ro \blank \ro\ro \twobyone \blank \ro\ro\ro \twobyone \ro\ro
\thirdrow \twobyone \ro \blank \re{2} \twobyone \ro \blank \re{2} \twobyone \ro \blank \re{2}
\fourthrow \blank \ro\ro\ro \twobyone \blank \ro \ro \twobyone \ro \blank \ro \twobyone \ro\ro
\fifthrow \ro\re{2}\ro \blank \re{2}\ro \blank \re{2}\ro \blank \re{2}
\end{scope}

\end{tikzpicture}
\capt{An orbit of $\uF(3,1,3,1,3)$ in each of the three sizes.}
\label{uF(3,1,3,1,3)}
\end{figure}

(b)  The following chart of the $r_i$ and $b_i$ values reduces this proof to a routine computation.
$$
\begin{array}{l||c|c|c|c|c|c|c|c}
\text{orbit}  &
\multicolumn{2}{c|}{\cL_k} & 
\multicolumn{2}{c|}{\cM_1} & 
\multicolumn{2}{c|}{\cS_k} & 
\multicolumn{2}{c}{\cS_k'}\\
& b_i & r_i & b_i & r_i & b_i & r_i & b_i & r_i\\ 
\hline
\text{row $1$}&
  a+1   & a &  3  &  2  &  1  &  1  &  1  &  1\\
\text{row $2$}& 
  0   & a   &  0  &  1  &  0  &  0  &  0  &  1\\
\text{row $3$}& 
  a   & a   &  3  &  1  &  1  &  1  &  1  &  0\\
\text{row $4$}& 
  0   & a   &  0  &  2  &  0  &  1  &  0  &  1  \\
\text{row $5$}&
  a+1   & 0 &  3  &  0  &  1  &  0  &  1  &  0  
\end{array}
$$

(c)  First of all, the lone orbit of size $3a+2$ must also be a dihedral group orbit  since it has no other orbit with which it could be paired.  To show that the orbits of size $a^2+2a$ are also dihedral group orbits, it suffices to find an ideal  $I\in \cL_k$ with $\bar{I}=\ka(I)\in\cL_k$.  We take as $I$ the ideal corresponding to column $0$, which consists of red tiles with their heads in rows $1$ and $3$, and the cell in row $5$ covered by the $k$th cell of a black tile for $1\le k\le a-1$ or by a yellow tile for $k=a$.  One can now verify that $\bar{I}$ corresponds to column $(a-k-1)(a+2)+1$ of $\cL_k$.

Finally, we show that $\ka$ pairs $\cS_k$
with $\cS_k'$.  Let $I$ correspond to column $a$ in $\cS_k$.  This column consists of the last cell of a black tile in row $1$, a yellow tile in row $2$, a red tile in rows $3$ and $4$, and the $(a-1)$st cell of a black tile in row $5$. It is not hard to see that $\bar{I}$ corresponds to column in $\cS_k'$ which contains a red tile with its head in row 4.
\end{proof}


\section{Conjectures and an open problem}
\label{cop}

We now consider some conjectures and an open problem in the hopes that they will spur future research.  To state the first one, call a sequence of real numbers $a_1,a_2,\ldots,a_n$ {\em palindromic} if 
$$
a_i=a_{n+1-i}
$$
for all $1\le i\le n$.  Associated with any fence $\uF(\al_1,\al_2,\ldots,\al_s)$ we have the {\em black tile sequence} $b_1,b_2,\ldots,b_s$ and the {\em red tile sequence} $r_1,r_2,\ldots,r_{s-1}$.
\begin{question}
Let $\al=(\al_1,\al_2,\ldots,\al_s)$ be palindromic and $F=\uF(\al)$.  Find necessary and/or sufficient conditions on $\al$ for the black or the red tile sequences to be palindromic for all rowmotion orbits.
\end{question}

From the proofs of Theorems~\ref{uF(a,b)}, \ref{uF(a,b,a)}, and~\ref{uF(a,a,a,a)}, we see that the black and red tile sequences are palindromic for palindromic $\al$ with  $s\le 3$, as well as for $\uF(a^4)$ where $a^s$ denotes $a$ repeated $s$ times.
We have checked by computer that $\uF(a^s)$ has palindromic tile sequences for all $a+s\le 12$ {\em except} $a=4$ and $s=8$, where the orbit containing the antichain $\{s_1,s_{2,1}\}$ (equivalently, $\{x_1,x_7\}$ indexing the entries from left to right), has black tile sequence
$$
21, 20, 18, 18, 19, 18, 19, 21
$$
and red tile sequence
$$
5, 4, 13, 4, 9, 8, 5.
$$

The black and red tile sequences are related for palindromic $\al$, as shown in the next result.  
\begin{prop}
Let $\al=(\al_1,\al_2,\ldots,\al_s)$ be palindromic where $\al_i\ge2$ for all $i\in[s]$, and let $F=\uF(\al)$.  Then for any orbit $\cO$ of $F$,
the black tile sequence  $b_1,b_2,\ldots,b_s$  is palindromic if and only if the red tile sequence
$r_1,r_2,\ldots,r_{s-1}$ is palindromic.
\end{prop}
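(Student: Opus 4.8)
The plan is to reduce everything to the single identity from Lemma~\ref{stats}(a). Since $\al_i\ge 2$ for every $i\in[s]$, that lemma gives, for all $i\in[s]$,
$$
b_i\al_i + r_i + r_{i-1} = \#\cO,
$$
where we use the standing convention $r_0=r_s=0$. I will treat $N:=\#\cO$ as a fixed constant and regard this as a linear system coupling the sequences $(b_i)$ and $(r_i)$, into which the palindromicity of $\al$, namely $\al_i=\al_{s+1-i}$, can be fed.

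First I would record a convenient reformulation of a palindromic red sequence: since $r_0=r_s=0$, the condition $r_i=r_{s-i}$ for $1\le i\le s-1$ is equivalent to $r_j=r_{s-j}$ holding for all $0\le j\le s$. For the direction ``red palindromic $\Rightarrow$ black palindromic,'' I would substitute $i\mapsto s+1-i$ into the displayed identity and use $\al_{s+1-i}=\al_i$ to get
$$
b_{s+1-i}\,\al_i = N - r_{s+1-i} - r_{s-i}.
$$
Applying the extended red palindrome in the forms $r_{s+1-i}=r_{i-1}$ and $r_{s-i}=r_i$ turns the right-hand side into $N-r_{i-1}-r_i = b_i\al_i$. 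Since $\al_i\ge 2>0$, cancelling the factor $\al_i$ gives $b_{s+1-i}=b_i$, i.e.\ the black sequence is palindromic.

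For the converse, ``black palindromic $\Rightarrow$ red palindromic,'' I would again compare the identity at index $i$ with its image at index $s+1-i$; using $b_i=b_{s+1-i}$ and $\al_i=\al_{s+1-i}$ the $b$-terms cancel and I obtain
$$
r_i + r_{i-1} = r_{s-i} + r_{s-(i-1)}
$$
for all $i\in[s]$. Setting $d_j:=r_j-r_{s-j}$, this rearranges to the telescoping recurrence $d_i=-d_{i-1}$, and since $d_0=r_0-r_s=0$, an immediate induction forces $d_j=0$ for all $j$. Hence $r_j=r_{s-j}$ for every $j$, which is exactly the red palindrome.

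I do not anticipate a genuine obstacle here: once Lemma~\ref{stats}(a) is in hand the argument is purely formal. The only points requiring care are the bookkeeping of the boundary values $r_0=r_s=0$ (so that the palindrome relation may be applied at the endpoints) and the index shifts $i\mapsto s+1-i$; these are precisely what make the two implications symmetric, and the hypothesis $\al_i\ge 2$ is what both licenses invoking Lemma~\ref{stats}(a) at every index and permits cancelling $\al_i$.
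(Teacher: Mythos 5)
Your proof is correct and follows essentially the same route as the paper's: both rest entirely on the identity $b_i\al_i+r_i+r_{i-1}=\#\cO$ from Lemma~\ref{stats}(a), the boundary convention $r_0=r_s=0$, and comparing the identity at index $i$ with its reflection at index $s+1-i$. The only difference is presentational: the paper runs a single induction establishing $b_i=b_{s-i+1}\Leftrightarrow r_i=r_{s-i}$ at each step, whereas you split the equivalence into two implications, one handled by direct substitution and the other by the telescoping recurrence $d_i=-d_{i-1}$, which is the paper's induction in disguise.
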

\begin{proof}
 We will do induction on $i$.  Because all $\al_i\ge2$, Lemma~\ref{stats} (a) implies that  $\#\cO=b_i\al_i+r_i+r_{i-1}$ for  any $i\in[s]$.  Using the fact that $r_0=r_s=0$ we obtain
$$
b_1\al_1+r_1 = \#\cO = b_s\al_s + r_{s-1}.
$$
Since $\al_1=\al_s$, This can be rewritten as
$$
\al_1(b_1-b_s)=r_{s-1}-r_1.
$$
It follows that $b_1=b_s$ if and only if $r_1=r_{s-1}$ which is the base case for our induction.

The induction step is similar.  We have
$$
b_i\al_i+r_i+r_{i-1}=\#\cO=b_{s-i+1}\al_{s-i+1}+r_{s-i+1}+r_{s-i}.
$$
By induction, we can assume $r_{i-1}=r_{s-i+1}$.  And $\al_i=\al_{s-i+1}$, so
$$
\al_i(b_i-b_{s-i+1})=r_{s-i}-r_i.
$$
Thus $b_i=b_{s-i+1}$ is equivalent to $r_{s-i}=r_i$, which finishes the proof.
\end{proof}

The condition that  $\al_i\ge2$ for all $i$ in this proposition is necessary.  For example, the proof of Theorem~\ref{uF(a,1,a,1,a)} (b) shows that the black tile sequence of an orbit is always palindromic, but that the red tile sequence is not necessarily so.
Having palindromic sequences yields certain homomesies for free. 
\begin{prop}
Let $\al=(\al_1,\al_2,\ldots,\al_s)$  where $\al_i\ge2$ for all $i\in[s]$. Also let $F=\uF(\al)$ and $n=\#F$. 
If $\al$ as well as the black and red tile sequences are all palindromic, then one has the following homomesies.
\begin{enumerate}
    \item[(a)] For all $k\in[n]$ the statistic $\chi_k-\chi_{n-k+1}$ is $0$-mesic.
    \item[(b)] If $s$ is odd, then for all $k\in[n]$ the statistic $\chih_k+\chih_{n-k+1}$ is $1$-mesic.
\end{enumerate} 
\end{prop}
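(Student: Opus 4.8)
The plan is to exploit the left--right symmetry of a palindromic fence and to read off every statistic orbit-by-orbit from the tiling via Lemma~\ref{stats}. Fix an orbit $\cO$ with tiling $T=\phi(\cO)$, whose black sequence $b_1,\dots,b_s$ and red sequence $r_1,\dots,r_{s-1}$ are palindromic by hypothesis. Since $\al$ is palindromic, the relabelling $\iota\colon x_k\mapsto x_{n+1-k}$ is an involution of $F$ that carries the $i$th segment onto the $(s+1-i)$th and preserves the shared/unshared dichotomy; it is an order-preserving automorphism when $s$ is even and an order-reversing one (so $F$ is self-dual) when $s$ is odd. The whole argument then reduces to identifying $x_{n+1-k}$ explicitly and substituting the palindromic relations $\al_{s+1-i}=\al_i$, $b_{s+1-i}=b_i$, and $r_m=r_{s-m}$ into the formulas of Lemma~\ref{stats}.

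For part (a) I would argue directly from Lemma~\ref{stats}(b), noting that the parity of $s$ plays no role. If $x_k\in\uS_i$ is unshared, then $x_{n+1-k}\in\uS_{s+1-i}$, so $\chi_{x_k}(\cO)=b_i=b_{s+1-i}=\chi_{x_{n+1-k}}(\cO)$; if $x_k=s_m$ is shared, then $x_{n+1-k}=s_{s-m}$, so $\chi_{x_k}(\cO)=r_m=r_{s-m}=\chi_{x_{n+1-k}}(\cO)$. In either case $\chi_k(\cO)=\chi_{n-k+1}(\cO)$, which is exactly the assertion that $\chi_k-\chi_{n-k+1}$ is $0$-mesic.

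For part (b), where $s$ is odd and $\iota$ reverses order, the image indices are finer: an unshared $s_{i,j}$ maps to $s_{s+1-i,\,\al_i-j}$, while a maximal shared element $s_{2i-1}$ maps to a minimal one and vice versa. I would combine Lemma~\ref{stats}(a) (giving $\#\cO=b_i\al_i+r_i+r_{i-1}$ for every $i$) with Lemma~\ref{stats}(c). In the unshared case the $b$-contributions add to $b_i(\al_i-j)+b_ij=b_i\al_i$, while the two $r$-contributions, using the parity identity $s+1-i\equiv i\pmod 2$ together with $r_m=r_{s-m}$, collapse to $r_i+r_{i-1}$; hence $\chih_{x_k}(\cO)+\chih_{x_{n+1-k}}(\cO)=b_i\al_i+r_i+r_{i-1}=\#\cO$. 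In the shared case the paired values $r_m$ and $\#\cO-r_{s-m}$ (or $\#\cO-r_m$ and $r_{s-m}$) cancel to give $\#\cO$ once more. This establishes that $\chih_k+\chih_{n-k+1}$ is $1$-mesic, and it automatically covers the fixed point of $\iota$ (the central element when $n$ is odd), for which the same computation yields $2\chih_{x_k}(\cO)=\#\cO$.

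I expect the only real obstacle to be the careful parity and index bookkeeping in part (b): tracking the index reversal $j\mapsto\al_i-j$, handling the $r_{i-\delta(i\text{ even})}$ term of Lemma~\ref{stats}(c) across the two parities of $i$, and matching maximal shared elements with minimal ones. It is worth emphasizing that the hypothesis here is the \emph{per-orbit} palindromicity of the tile sequences, which is precisely what lets the identity hold on each individual orbit; the self-duality machinery of Lemma~\ref{rho+bar} and Corollary~\ref{DualOrb} would only deliver the weaker statement on superorbits.
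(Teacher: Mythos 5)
Your proposal is correct and follows essentially the same route as the paper: identify $x_{n+1-k}$ under the index-reversing symmetry of the palindromic fence, substitute the palindromic relations for $\al$, the $b_i$, and the $r_i$ into Lemma~\ref{stats}, and close with $\#\cO=b_i\al_i+r_i+r_{i-1}$ from part (a) of that lemma. The only cosmetic difference is that for shared elements in part (b) you compute $r_m+(\#\cO-r_{s-m})=\#\cO$ directly, whereas the paper cites Theorem~\ref{homo}(d), whose proof is that same one-line calculation.
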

\begin{proof}
(a) 
First suppose $x_k\in\uS_i$ for some $i$.  Since $\al$ is palindromic, it follows that $x_{n-k+1}\in\uS_{s-i+1}$.  Now using Lemma~\ref{stats} and the fact that the black tile sequence is palindromic, we see   that
$$
\chi_k(\cO) =b_i = b_{s-i+1} = \chi_{n-k+1}(\cO)
$$
for all orbits $\cO$.
This proves the homomesy for our choice of $x_k$. The proof when $x_k=s_i$ for some $i$ is similar using  the palindromicity of the red tile sequence.  The details are omitted.

\medskip

(b)  If $x_k=s_i$ for some $i$, then this follows from our assumptions and Theorem~\ref{homo} (d).  So assume that $x_k=s_{i,j}$ for some $i,j$.  There are two similar cases depending on whether $i$ is even or odd, so we will only give details for the former.  From Lemma~\ref{stats} (c), for any orbit $\cO$ we have
$$
\chih_k(\cO) = b_i(\al_i-j) +r_{i-1}.
$$
Since $x_k$ is an unshared element on $S_i$, we have that $x_{n-k+1}$ is an unshared element on $S_{s-i+1}$.
And since $x_k$ is the $j$th unshared element from the bottom, we have that $x_{n-k+1}$ is the $j$th unshared element from the top.  So counting from the bottom of the unshared elements in  $S_{s-i+1}$  we have that $x_{n-k+1}$ is element number
$\al_{s-i+1}-j$.
Using Lemma~\ref{stats} (c) again, as well as the assumption that the $b_i$ and $r_i$ sequences are palindromic, gives
$$
\chih_{n-k+1}(\cO)=
b_{s-i+1}[\al_{s-i+1}-(\al_{s-i+1}-j)]+r_{(s-i+1)-1}
=jb_i+r_i.
$$
Finally
$$
\chih_k(\cO)+\chih_{n-k+1}(\cO)
= b_i(\al_i-j) +r_{i-1} +jb_i+r_i = \#\cO
$$
by Lemma~\ref{stats} (a).  This completes the proof of (b).
\end{proof}

We conjecture that even more homomesies are true for constant $\al$.
\begin{conj}
\label{a^s}
Let $\al=(a^s)$ and let $F=\uF(\al)$. 
\begin{enumerate}
    \item[(a)] The statistic $\chi$ is homometric.
    \item[(b)] If $s$ is odd then the statistic $\chih$ is $n/2$-mesic where $n=\#F$.
\end{enumerate}
\end{conj}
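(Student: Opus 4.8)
The conjecture is open, so what follows is a strategy rather than a complete argument. For part (b) the key observation is that, since $\al=(a^s)$ is a palindrome with an odd number of parts, the fence $F=\uF(a^s)$ is self-dual, so Theorem~\ref{n/2-mes} already gives that $\chih$ is $(n/2)$-mesic on \emph{superorbits}. The plan is to upgrade this from superorbits to ordinary orbits. By Corollary~\ref{DualOrb}, a superorbit is either a single $\rhoh$-orbit closed under the ideal complement $I\mapsto\Ib$, or a union of two distinct orbits $\cO\uplus\cOb$ of equal size swapped by complementation; in the first case Corollary~\ref{DualOrb}(a) gives average exactly $n/2$ on that orbit, whereas in the second only the \emph{combined} average is controlled. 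Hence it suffices to prove that for $\uF(a^s)$ with $s$ odd \emph{every orbit is self-complementary}, i.e.\ $\Ib\in\cO$ whenever $I\in\cO$. Through the tiling model this says that the cylindrical tiling of each orbit is invariant under the involution that reflects rows $i\leftrightarrow s+1-i$, reverses the unshared elements within each segment, and reverses the column direction, the direction reversal coming from Lemma~\ref{rho+bar} (which shows complementation conjugates $\rhoh$ to $\rhoh^{-1}$). Such a symmetry in particular forces the black and red tile sequences to be palindromic, so this route subsumes the odd-$s$ case of the Question stated just above.

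For part (a), which must hold for \emph{all} $s$ and so cannot rely on self-duality or on palindromic tile sequences (as the $\uF(4^8)$ example shows), I would first reduce the orbomesy to a single invariant. Writing $B=\sum_{i=1}^s b_i$ and $R=\sum_{i=1}^{s-1} r_i$ for the total numbers of black and red tiles of an orbit $\cO$, Lemma~\ref{stats}(a) gives $b_i\,a=\#\cO-r_i-r_{i-1}$ for each $i$ (using $a\ge2$ and $r_0=r_s=0$), whence $aB+2R=s\cdot\#\cO$. Substituting into the formula $\chi(\cO)=(a-1)B+R$ from Lemma~\ref{stats}(d) yields
$$
\chi(\cO)=\frac{(a-1)s}{a}\,\#\cO+\frac{2-a}{a}\,R .
$$
For $a=2$ the $R$-term vanishes and we recover Theorem~\ref{homo}(f); for $a\ge3$ the statistic $\chi$ is orbomesic if and only if $R$, the total number of red tiles, depends only on the orbit size $\#\cO$. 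Thus part (a) reduces to the clean claim that \emph{among all orbits of $\uF(a^s)$ of a fixed size, the total red-tile count $R$ is constant}. This is strictly weaker than palindromicity, consistent with the $\uF(4^8)$ orbit (size $89$, $R=48$) being non-palindromic yet, conjecturally, sharing its $R$-value with every other size-$89$ orbit.

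To establish the two structural claims ($R$ constant on size for (a), self-complementarity of orbits for (b)) I would aim for an explicit combinatorial description of the orbits of $\uF(a^s)$ for general $s$, generalizing the case analyses of Theorems~\ref{uF(a,b,a)} and~\ref{uF(a,a,a,a)}. The most promising route is induction on $s$, peeling off the last segment as in the ideal-count recurrence of Lemma~\ref{recurrenceI} and tracking how orbits, their sizes, and their red-tile totals transform under this reduction. For part (b) I would additionally exploit Corollary~\ref{TogF}: since the ideal base graph $\Gh_F$ is acyclic, $\chih=\sum_y\chih_y$ is $(n/2)$-mesic under rowmotion if and only if it is so under \emph{any} Coxeter element of $\cTh_F$, and choosing a linear extension compatible with the order-reversing bijection $\ka$ should make the self-complementary symmetry of each orbit manifest.

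The main obstacle is precisely the absence of a closed-form description of the orbit structure of $\uF(a^s)$ for arbitrary $s$: all the exact results in the paper stop at $s=5$, and the invariants that must be controlled ($R$ and the complementation symmetry) are global features of an entire orbit rather than of individual antichains. The $\uF(4^8)$ example is a warning that one cannot hope to prove (a) by showing the full tile sequence is determined by the size; only the coarser sum $R$ is, so a successful argument must isolate exactly the right conserved quantity, most plausibly via a size-preserving bijection between orbits that also preserves the number of red tiles.
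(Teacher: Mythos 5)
There is no proof to compare against here: the statement you addressed is stated in the paper as a conjecture and is left open, so a strategy is the right genre. However, the central step of your plan for part (b) is precisely the route the paper explicitly rules out. You reduce part (b) to the claim that every rowmotion orbit of $\uF(a^s)$, $s$ odd, is self-complementary, i.e.\ $I\in\cO$ implies $\Ib\in\cO$, and then invoke Corollary~\ref{DualOrb}(a). The paper gives a counterexample lying squarely inside the hypotheses of part (b): for $\al=(2^7)$ the ideal $I=\{x_2,x_4,x_7\}$ has $\Ib=\{x_1,x_3,x_5\}$, which is in a \emph{different} orbit. So the statement you declare it ``suffices to prove'' is false, and superorbits can be genuine unions of two complementary orbits. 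Since the conjecture has been verified by computer for $a+s\le 12$ (which includes $(2^7)$), in such cases each of the two paired orbits must individually average $n/2$ by some mechanism invisible to Corollary~\ref{DualOrb}(b), which only controls the combined average; your strategy as stated cannot detect it. At best one could hope self-complementarity holds for $a\ge 3$, but that would have to be proved and $a=2$ handled by entirely different means (note that Theorem~\ref{homo}(f) concerns $\chi$ on antichains, not $\chih$, so it does not fill that hole). The same counterexample undercuts your claim that this route ``subsumes'' the odd-$s$ case of the paper's Question via an orbit-level symmetry of the tilings.

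Your part (a) reduction, by contrast, is correct algebra and a sensible reformulation. Summing Lemma~\ref{stats}(a) over the rows (with $r_0=r_s=0$) does give $aB+2R=s\cdot\#\cO$, and combining with $\chi(\cO)=(a-1)B+R$ from Lemma~\ref{stats}(d) yields
$$
\chi(\cO)=\frac{(a-1)s}{a}\,\#\cO+\frac{2-a}{a}\,R,
$$
so for $a\ge3$ orbomesy of $\chi$ is equivalent to the total red-tile count $R$ being determined by $\#\cO$, while $a=2$ recovers Theorem~\ref{homo}(f); your numerics for the exceptional $\uF(4^8)$ orbit ($\#\cO=89$, $R=48$) are consistent with Lemma~\ref{stats}(a). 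But this is a restatement of the conjecture in terms of a single conserved quantity, not progress toward proving it: the constancy of $R$ on orbit sizes, for which you offer only the hope of an induction on $s$ generalizing Theorems~\ref{uF(a,b,a)} and~\ref{uF(a,a,a,a)}, is exactly as open as part (a) itself.
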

Again, we have evidence for this conjecture from the results in Section~\ref{ffs}, which prove that it holds for $s\le4$.  Furthermore, computer calculations have verified the truth of these statements for $a+s\le12$.
One might be tempted to try and prove part (b) by showing that, under the given hypotheses, the dihedral group orbits and orbits are the same, and then using Theorem~\ref{n/2-mes}. This would mean that for every ideal $I$ we would have $\Ib$ in the same orbit with $I$.  But this is not always true.  For example, if $\al=(2^7)$ and $F=\uF(\al)$, then the ideal $I=\{x_2,x_4,x_7\}$ has 
$\Ib=\{x_1,x_3,x_5\}$, which is in a different orbit.

We also note that if $a=2$ then Conjecture~\ref{a^s} (a) is implied by the following stronger statement.  This result also follows from Corollary 3.11 in a paper of Chan, Haddadan, Hopkins, and Moci~\cite{CHHM:ejo}.
\begin{prop}
\label{a=2}
In $F=\uF(2^s)$ the statistc $\chi$ is $s/2$-mesic.
\end{prop}
\begin{proof}
Let $x_i$ be the unique unshared element on $S_i$ for $i\in[s]$.
Let 
$$
\chi_i = 2\chi_{x_i} + \chi_y+\chi_z
$$
where $y,z$ are as in Theorem~\ref{homo}.  By that result, for any orbit $\cO$ we have
$\sum_i \chi_i(\cO)=s\cdot \#\cO$.  On the other hand, in the previous sum $\chi_x$ appears twice for every $x\in F$.  It follows that
$\sum_i \chi_i(\cO)= 2\chi(\cO)$ which completes the proof.
\end{proof}

Finally we mention that Defant and Lin have conjectured that the down-degree statistic is homometric in the $\nu$-Tamari lattice corresponding to a lattice path $\nu$.  See~\cite{DL:rmt} for more information.

{\em Acknowledgement.}  We would like to thank BIRS (the Banff International Research Station) for hosting the conference during which the research for this paper began.
We also thank Sam Hopkins for pointing out Proposition~\ref{a=2} and both its proofs.



\nocite{*}
\bibliographystyle{alpha}

\end{document}